\newtheorem{thm}{Theorem}[section]
\newtheorem{lemma}[thm]{Lemma}
\newtheorem{prop}[thm]{Proposition}
\newtheorem{defn}[thm]{Definition}
\def\Cpw{\ensuremath{\mathcal{C}_{pw}([0,T])}}
\def\L{\ensuremath{\mathcal{L}}}
\def\half{\ensuremath{\frac{1}{2}}}
\def\n{\ensuremath{\mathbb{N}}}
\def\re{\ensuremath{\mathbb{R}}}
\newcommand{\lf}[1]{#1}
\newcommand{\lfff}[1]{#1}
\newcommand{\lff}[1]{#1}
\newcommand{\short}[1]{#1}
\newcommand{\update}[1]{#1}
\newcommand{\dt}[1]{\frac{d{#1}}{dt}}
\title{Optimal strategies for operating energy storage in an arbitrage market\thanks{This work was
supported by EPSRC (grant EP/K002228/1) and the Alfred P. Sloan Foundation New York.}} 
\author{Lisa Flatley\thanks{Mathematics Institute, University of Warwick, Gibbet Hill Road, Coventry, CV4 7AL.  Corresponding email address: L.Flatley@warwick.ac.uk} \and 
Robert S MacKay\footnotemark[2] \and Michael Waterson \thanks{Economics Department, University of Warwick, Gibbet Hill Road, Coventry, CV4 7AL.}}
\date{}
\begin{document}
\maketitle

\begin{abstract}
We characterize profit-maximizing operating strategies, over some time horizon $[0,T],$ for an energy store which is trading in an arbitrage market.  Our theory allows for leakage, operating inefficiencies, operating constraints and general cost functions.  In the special case where the operating cost of a store depends only on its instantaneous power output (or input), we present an algorithm to determine the optimal strategies.  A key feature is that this algorithm is localized in time, in the sense that the action of the store at a time $t\in[0,T]$ only requires information about electricity prices over some subinterval of time $[t,t_k]\subset[t,T].$  \lff{To introduce more} complex storage models, we discuss methods \lff{for} an example which includes minimum switching times between modes of operation.
\end{abstract}



\pagestyle{myheadings}
\thispagestyle{plain}

\section{Introduction}
Over the coming decades, the UK energy market faces significant challenges as it strives to meet its climate change targets.  Low carbon and renewable generation will need to play a more dominant role in our future electricity supply market.  However, renewable energy is intermittent and its availability is driven by uncontrollable elements, such as wind speed and solar intensity.  Other options, such as nuclear power and Carbon Capture and Storage, are generally considered to be less flexible than traditional thermal plants.  The problems are clear.  The unreliability of renewable supply means that there will always be a need for a quick-reacting back-up, in order to ensure that demand is met.  On the other hand, at off-peak demand times, supply may be curtailed (\short{often at a high cost}) if the system is not flexible enough to respond.  

On the demand side, despite improved energy efficiency measures, we can expect to see an overall increase in consumption as the electrification of transport and heating become more commonplace \cite{DECC}.  \lf{If all environmental targets are met on time, then National Grid's Gone Green scenario predicts a 12.6$\%$ increase in Great Britain's peak demand by 2035, from 2013 levels \cite{NG}.}  In addition, future demand profiles are likely to look very different from today.  Electric heat pumps, for example, will create a marked seasonal peaking of demand during the colder, winter months.  Electric vehicle charging \short{and new smart technologies}, on the other hand, \short{could} distort our daily demand patterns.  

Electricity storage is one potential option for improving the flexibility and reliability of our electricity system.  It could offer services such as peak-shaving, frequency response, reactive power regulation and the provision of reserve.  In the UK, \lf{large-scale} electricity storage currently comes only in the form of pumped hydro power plants, which are implemented largely to meet early evening winter peak demands between 4pm and 8pm \cite{ERP}.  Their main sources of revenue are through providing the balancing services, Short-Term Operating Reserve (STOR) and Fast Reserve, which are funded by National Grid.  The stores then replenish their supply during the night, when electricity prices are at their lowest \cite{Mac}.  


Much research has been dedicated to analyzing storage viability.  Some papers focus on the benefits which a store could bring to the electricity system \lf{if operated in a socially optimal way} (see, for example, \cite{AEA, Poy, BlaStr, Str}).  Typically, the approach of these papers is to solve the unit dispatch problem i.e. to select a suitable configuration of generators (including storage) to run at each time, in order to ensure that demand is met (or that demand is met with a certain high probability).  An advantage of this method is that it can incorporate the interactions between all assets of the energy system into a single optimization problem, allowing a comparison between the actions of storage against its competing options.  A drawback, however, is that the system optimum does not necessarily coincide with each individual firm's optimal strategy.  In many cases, it is not even clear that each firm would make a profit under these solutions.  Therefore, the whole-system approach is generally better suited to questions 
where the store is not privately owned, but instead owned and operated by a central controller, such as the system operator.

Other papers focus on the profits available to a store which faces stochastic or probabilistic prices.  Some of these papers restrict the behaviour of the store to a pre-defined set of operating strategies (e.g. \cite{BarInf, Gru}).  Another approach is to implement Dynamic Programming techniques (e.g. \cite{Ari, LohMin}).  A drawback of this latter approach, however, is that \short{it tends to be computationally heavy and requires} information about prices over the entire time horizon over which we wish to optimize.  Additionally, such methods do not give much scope for mathematical insight into the dynamics of the solutions.    

In this paper, we present a method to determine the maximum profit available to an electricity store which is operating in the wholesale electricity market.  \short{We assume that} 
the store can predict an electricity price function $p:[0,T]\to\re$ \lfff{(or alternatively, a cost function)} where $[0,T]$ is the period of time over which we wish to optimize.  However, the method only ever uses price information over a smaller time interval, thus \lfff{reducing the required prediction horizon and the amount of computation needed.}  \short{It is worth commenting here that the prediction of future prices in this way is} not an unreasonable assumption, since most electricity today is traded through ``over-the-counter'' bilateral contracts, which are agreed ahead of operating time.  

Our method is derived from standard Calculus of Variations techniques and is intended as an extension to \cite{CFGZ}.  Our work differs from \cite{CFGZ} in several respects.  Firstly, we present our model in a continuous time setting, rather than the discrete setting employed in \cite{CFGZ}.  Even if prices are declared at discrete time intervals, in accordance with our current market system, our approach allows for the input of piecewise constant prices but with continuous variation in the operation of the store.  Secondly, \cite{CFGZ} allows only for convex cost functions, whereas here we allow for much more general operating costs and, in particular, we remove the convexity assumption.  This is realistic: in general, there is little reason to believe that the cost of running a motor, for example, is a convex function of the output power.  Moreover, there may be cost jumps involved in switching on a new motor if additional power is required.  Although these more general conditions do not guarantee 
the existence of 
solutions, we prove that optimal operating strategies of the form given in Proposition~\ref{prop:ps} exist if and only if the associated algorithm does not terminate early.  \lff{Finally, we allow the inclusion of additional operating constraints.  If these constraints can be expressed as bounds on the instantaneous power output (e.g. in the case of enforced periods of closure for maintenance), then they can easily be incorporated into the main result (Proposition~\ref{prop:ps}) and the optimal strategy can be determined using the algorithm of Section~\ref{sec:alg}.  This method reduces the optimization problem to a set of smaller problems which are localized in time.  For other constraints, however, this localization in time is not guaranteed.  In Section~\ref{sec:switching}, we demonstrate this issue and propose a method for handling an additional condition if the store must wait for a fixed amount of time (called the minimum switching time) whenever it wishes to change its mode of operation between the charging and the discharging mode.}

The structure of the paper is as follows: Over the remainder of this section, we introduce our storage model and its associated costs. In Section 2, we present the main result, which characterizes the optimal strategies via a reference price function.  In Section 3, we present an algorithm which determines both the optimal strategy and the reference price \lf{for the basic case where the store is only constrained by its power ratings and its capacity constraints}.  \lf{We prove that an optimal strategy exists in this case only if the algorithm does not terminate early. \lff{Section~\ref{sec:constraints} contains a discussion of a simple storage model and in Section~\ref{sec:switching} we investigate the inclusion of a minimum switching time constraint.}  In Section 5, we present and discuss some illustrative results.  Finally, the conclusions of the paper are summarized in Section 6.}

\subsection{The storage model}\label{sec:model}
We use a simple technology-agnostic model for storage, which incorporates a number of key features, both physical and economic.  To allow for multiple applications, we define all quantities in a general setting.  Towards the end of the paper, in Section~\ref{sec:constraints}, we present a concrete (and technically relevant) example.

Let $[0,T]$ be the interval of time over which we want to optimize the actions of the store, for some $T>0$.  \lff{At each time} $t\in[0,T],$ we denote by $\lff{U(t)}\subset\re$ the set of admissible power outputs associated with the store \lff{at that time}.  The operator of the store may then choose an \textit{operating strategy} $q:[0,T]\to\re$ which allocates, at each time $t,$ the amount of power $q(t)\in\lff{U(t)}$ that is to enter the store (using the convention that if $q(t)<0,$ then the store discharges $-q(t)$ \lf{units of power} at time $t$).  \lff{Allowing the power constraint set $U(t)$ to vary with time in this way means that we may incorporate constraints such as planned periods of closure or transmission congestion into the model.  A planned closure over a time period} $I\subset[0,T],$ \lff{for example, corresponds to the power constraint set}
$U(t)=\{0\}$ for all $t\in I.$

The amount of stored energy, or the \textit{level of the store}, which results from the strategy $q$ at time $t$ is denoted $\ell[q](t)$ and evolves according to the differential equation
\begin{align}\label{elldefn}
 \frac{d}{dt}\ell[q](t)=-\alpha\ell[q](t)+q(t).
\end{align}
Here, $\alpha\in[0,1)$ is the leakage rate, which is introduced to reflect the self-discharge which occurs due to imperfect sealing or porous casing.  Given an initial condition $\ell[q](0)=\ell_0,$ the level of the store at each time $t\in[0,T]$ is thus given by
\begin{align}
 \ell[q](t)=e^{-\alpha t}\ell_0+e^{-\alpha t}\int_0^te^{\alpha s}q(s)ds.
\end{align}

We will 
only consider strategies $q:[0,T]\to\re$ which are piecewise continuous functions of time.  This is a physically reasonable assumption, since any strategy which has an accumulation point of discontinuities or a non-jump discontinuity would be hard to achieve by any piece of equipment.  Throughout the remainder of this paper, we denote by $\mathcal{C}_{pw}(I)$ the space of piecewise continuous functions $u:I\to\re,$ for any subset $I\subset\re.$    

The \textit{capacity constraints} of the store are characterized by two functions $E^+,E^-:[0,T]\to\re,$ so that any strategy $q:[0,T]\to U$ is constrained by the inequalities
\begin{align*}
 E^-(t)\leq\ell[q](t)\leq E^+(t) \ \ \ \ \ \ \ \ \ \ \forall t\in[0,T].
\end{align*}
We assume that the initial and terminal levels of the store are pre-specified, so that $$E^-(0)=E^+(0) \ \ \ \ \ \textnormal{and} \ \ \ \ \ E^-(T)=E^+(T).$$
Note that, if not specified, then the terminal condition is in fact implicit since any optimal strategy $q^*$ should satisfy that $\ell[q^*](T)=E^-(T).$  \lff{We also assume, without loss of generality, that} $\{(d/dt)E^-(t)+\alpha E^-(t),(d/dt)E^+(t)+\alpha E^+(t)\}\subset U(t)$ \lff{for all} $t\in[0,T]$ (otherwise $E$ could be adjusted so that this is true, without changing the set of admissible strategies).

The set of possible levels of the store, at each time, is represented by the \textit{admissible energy domain} $E\subset[0,T]\times\re,$ which is the set of all pairs $(t,m)\in[0,T]\times[E^-(t),E^+(t)].$  Often, one may choose to replace the capacity constraints $E^+$ and $E^-$ with constants, so that $E^+>0$ is the physical size of the store and $E^-\geq 0$ is the minimum technically feasible level of the store (which may be strictly positive, for example in the case of compressed air energy storage, where a cushion of air needs to be maintained in the store at all times in order to provide sufficient pressure to instigate discharging).  We have chosen here to represent $E$ in this more general form, to allow the store to participate in multiple markets.  For example, if a store decides to participate in a Balancing Mechanism reserve contract, it may be required by National Grid to make supply available over certain agreed periods.  Thus, at these contracted times the store will have less capacity 
available for use in the wholesale market.  

\lff{In addition to the capacity and power constraints outlined above, the operation of a store may be restricted by further factors, such as warm-up times, ramping constraints and minimum switching times between modes of operation.  We denote the collection of additional constraints associated with the store by $\Gamma.$  The only condition on these constraints is that they must be entirely expressible as functions of the power output function $q.$  
A maximum ramp-up rate $r>0$ on the charge side, for example, may be written as inequalities 
$$\dot q(t)\leq r \qquad \forall t\in[0,T] \ \textnormal{such that} \ q(t)>0.$$  \lff{We do not allow for constraints of this kind in Section~\ref{sec:nocon}.  However, the impact of a non-empty set $\Gamma$ is discussed in Section~\ref{sec:constraints}.}
}

The total cost incurred by the store, given an operating strategy $q,$ is denoted $C[q]\in\re.$  Here, $C$ is a functional which maps $\Cpw$ into $\re.$  This can incorporate factors such running costs, warming-up costs and costs of storing, as well as the cost of purchasing power and the payments (counted as negative cost) received for providing power.  The aim of this paper is to identify those operating strategies $q$ which minimize the total cost $C[q],$ whilst adhering to all of the physical constraints outlined above.  If the store is profitable to run, then the minimal cost should, of course, be negative.

\begin{defn}[Admissible strategies and optimal strategies]\label{defn:storage}
We say that $q:[0,T]\to\re$ is an admissible strategy if $q$ is piecewise continuous and satisfies the following properties:
\begin{enumerate}
 \item For each $t\in[0,T],$ we have $q(t)\in U(t)$ and
$\ell[q](t)\in[E^-(t),E^+(t)].$  
\item \lf{If $\Gamma\neq\emptyset,$ then the strategy $q$ satisfies each of the constraints in $\Gamma.$}
\end{enumerate}
We denote by $X$ the set of all admissible strategies and say that $q^*\in X$ is an \textit{optimal strategy} if  
  \begin{align}\label{minprob}
  C[q^*]\leq C[q] \ \ \ \ \ \ \ \ \ \ \ \ \ \forall q\in X.
  \end{align}
 
\end{defn}

\section{Characterization of optimal strategies}
The following proposition provides a characterization of optimal strategies in terms of a ``reference price'' function $\mu^*.$  \lf{We introduce the \textit{indicator functional} $S:\mathcal C_{pw}\to \{0,1\},$ which is defined by the property that $S[q]$ takes the value 0 if and only if $q$ satisfies each of the constraints in $\Gamma.$  }

\begin{prop}[Characterization of optimal strategies]\label{prop:ps}
 Suppose there exists a constant \update{$\rho^*\in\re,$} a piecewise differentiable function $\mu^*:[0,T]\to\re$ and a strategy $q^*\in X$ with the following properties: 
 \begin{enumerate}[(i)]
  \item The strategy $q^*$ is a minimizer of
\begin{align}\label{eq:propmax}
C[q]-\int_0^Te^{\alpha t}\mu^*(t) q(t)dt+\rho^*S[q]
\end{align}
over all piecewise continuous functions $q:[0,T]\to\re$ \lff{such that} $q(t)\in U(t)$ \lff{for all} $t\in[0,T].$
\item If $\mu^*$ is differentiable at $t\in[0,T],$ then the following complementary slackness conditions are satisfied:
\begin{align}
\dt{\mu^*}(t)=0 \ & \ \ \ \textnormal{if} \ E^-(t)<\ell[q^*](t)<E^+(t),\label{eq:mucond1}\\
\dt{\mu^*}(t)\geq 0 \ & \ \ \ \textnormal{if} \ \ell[q]^*(t)=E^+(t),\label{eq:mucond2}\\
\dt{\mu^*}(t)\leq 0 \ & \ \ \ \textnormal{if} \ \ell[q^*](t)=E^-(t).\label{eq:mucond3}
\end{align}
\item If $\mu^*$ is not differentiable at $t\in[0,T],$ then the following ``jump'' complementary slackness conditions are satisfied:
\begin{align}
 \mu^*(t^+)-\mu^*(t^-)=0 \ & \ \ \ \textnormal{if} \ E^-(t)<\ell[q^*](t)<E^+(t),\label{eq:mucond1'}\\
\mu^*(t^+)-\mu^*(t^-)\geq 0 \ & \ \ \ \textnormal{if} \ \ell[q^*](t)=E^+(t),\label{eq:mucond2'}\\
\mu^*(t^+)-\mu^*(t^-)\leq 0 \ & \ \ \ \textnormal{if} \ \ell[q^*](t)=E^-(t)\label{eq:mucond3'},
\end{align}
where $\mu^*(t^-)$ and $\mu^*(t^+)$ are the left and right limits respectively of $\mu^*$ at $t.$
 \end{enumerate}

Then $q^*$ is an optimal strategy.
\end{prop}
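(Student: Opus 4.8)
The plan is to read $\mu^*$ and $\rho^*$ as Lagrange multipliers for the capacity constraints and for the additional constraints $\Gamma$, and to show that hypothesis (i) together with the complementary slackness conditions (ii)--(iii) forces $C[q^*]\le C[q]$ for every $q\in X$. First I would fix an arbitrary admissible $q\in X$ and invoke (i). Since $q$ satisfies the power constraints $q(t)\in U(t)$, it is an admissible competitor in the minimization of \eqref{eq:propmax}, so $q^*$ attains no larger value. Because $q,q^*\in X$ both satisfy every constraint in $\Gamma$, we have $S[q^*]=S[q]=0$, and the two $\rho^*$-terms cancel. Rearranging the resulting inequality gives
\[
C[q^*]-C[q]\le \int_0^T e^{\alpha t}\mu^*(t)\big(q^*(t)-q(t)\big)\,dt,
\]
so it suffices to prove that the integral on the right is non-positive.

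Next I would introduce the level difference $D(t):=\ell[q^*](t)-\ell[q](t)$, which by \eqref{elldefn} satisfies $\dot D=-\alpha D+(q^*-q)$ with $D(0)=0$ (both strategies share the pinned initial level $E^-(0)=E^+(0)$) and $D(T)=0$ (both are pinned to $E^-(T)=E^+(T)$). Setting $g(t):=e^{\alpha t}D(t)$, the ODE rewrites the integrand as $e^{\alpha t}(q^*-q)=\dot g$, turning the target integral into $\int_0^T\mu^*\dot g\,dt$. I would then integrate by parts on each maximal subinterval where $\mu^*$ is differentiable and sum. Since $g$ is continuous (being an integral of a piecewise continuous function) and vanishes at both endpoints, the boundary contributions at $0$ and $T$ cancel, leaving
\[
\int_0^T e^{\alpha t}\mu^*(t)\big(q^*(t)-q(t)\big)\,dt = -\int_0^T \dot\mu^*(t)\,g(t)\,dt - \sum_j \big(\mu^*(t_j^+)-\mu^*(t_j^-)\big)\,g(t_j),
\]
where the sum runs over the interior points $t_j$ at which $\mu^*$ fails to be differentiable.

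Finally I would sign each term using complementary slackness. The key observation is that wherever $\ell[q^*](t)=E^+(t)$ the admissibility of $q$ gives $g(t)=e^{\alpha t}\big(E^+(t)-\ell[q](t)\big)\ge 0$, and symmetrically $g(t)\le 0$ wherever $\ell[q^*](t)=E^-(t)$, while in the interior of the domain the multiplier is flat. Matching these signs against \eqref{eq:mucond1}--\eqref{eq:mucond3} yields $\dot\mu^*(t)\,g(t)\ge 0$ pointwise, and matching against \eqref{eq:mucond1'}--\eqref{eq:mucond3'} yields $\big(\mu^*(t_j^+)-\mu^*(t_j^-)\big)\,g(t_j)\ge 0$ for each jump. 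Hence both the integral and the sum are non-negative, the target integral is non-positive, and $C[q^*]\le C[q]$ follows, establishing optimality of $q^*$.

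I expect the main obstacle to be the bookkeeping in the integration by parts when $\mu^*$ is only piecewise differentiable: one must account for the internal jumps of $\mu^*$ so that they produce exactly the jump sum above, and one must verify that the endpoint terms genuinely vanish, which relies simultaneously on the common initial level and on the fixed terminal level $E^-(T)=E^+(T)$. Once this identity is assembled cleanly, the remaining sign analysis is a routine three-case check driven entirely by the complementary slackness conditions and the admissibility of the competitor $q$.
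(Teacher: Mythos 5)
Your proposal is correct and follows essentially the same route as the paper's own proof: cancel the $\rho^*S[q]$ terms using admissibility, rewrite $e^{\alpha t}(q^*-q)$ as the derivative of $e^{\alpha t}\bigl(\ell[q^*]-\ell[q]\bigr)$ via (\ref{elldefn}), integrate by parts over a partition adapted to the non-smooth points, kill the endpoint terms with the pinned initial and terminal levels, and sign the remaining integral and jump terms with the complementary slackness conditions. Your $g(t)=e^{\alpha t}\bigl(\ell[q^*](t)-\ell[q](t)\bigr)$ notation is just a cleaner repackaging of the identical decomposition.
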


\begin{proof}
Let $q\in X$ and let $0=a_1<\ldots<a_n<a_{n+1}=T$ be a partition such that $\cup_{i=1}^n[a_i,a_{i+1}]=[0,T]$ and $q^*,q$ are continuous and $\mu^*$ is differentiable over each $(a_i,a_{i+1}).$  Notice that (\ref{elldefn}) can be equivalently written as
$$\frac{d}{dt}e^{\alpha t}\ell[q](t)=e^{\alpha t}q(t) \ \ \ \ \ \ \ \ \ \ \ \forall t\in[0,T].$$
Hence, (\ref{eq:propmax}) implies
\begin{align*}
C[q^*]&-C[q]\leq\int_0^Te^{\alpha t}\mu^*(t)\Big(q^*(t)-q(t)\Big)dt+\rho^*\Big(\underbrace{S[q^*]-S[q]}_{=0}\Big)\\
&\stackrel{(\ref{elldefn})}{=}\sum_{i=1}^n\int_{a_i}^{a_{i+1}}\mu^*(t)\left\{\frac{d}{dt}\Big(e^{\alpha t}\ell[q^*](t)-e^{\alpha t}\ell[q](t)\Big)\right\}dt\\
&=\sum_{i=1}^n\Big[e^{\alpha a_{i+1}}\mu^*(a_{i+1}^-)\left(\ell[q^*](a_{i+1})-\ell[q](a_{i+1})\right)-e^{\alpha a_i}\mu^*(a_i^+)\left(\ell[q^*](a_i)-\ell[q](a_i)\right)\Big]\\
& \ \ \ \ \ \ \ \ \ \ \ \ \ \ \ \ \ \ \ \ \ \ \ \ \ \ \ \ \ \ \ \ \ \ \  \ \ \ \ \ \ \ \ \ \ \ \  -\sum_{i=1}^n\int_{a_i}^{a_{i+1}}e^{\alpha t}\dt{\mu^*}(t)\left(\ell[q^*](t)-\ell[q](t)\right)dt\\
&=\sum_{i=2}^ne^{\alpha a_i}\left(\mu^*(a_i^-)-\mu^*(a_i^+)\right)\left(\ell[q^*](a_i)-\ell[q](a_i)\right)\\
& \ \ \ \ \ \ \ \ \ \ \ \ \ \ \ \ \ \ \ \ \ \ \ \ \ \ \ \ \ \ \ \ \ \ \  \ \ \ \ \ \ \ \ \ \ \ \  -\sum_{i=1}^n\int_{a_i}^{b_i}e^{\alpha (t)}\dt{\mu^*}(t)\left(\ell[q^*](t)-\ell[q](t)\right)dt
\end{align*}
where we have used the fact that, since $q$ and $q^*$ are admissible, then $S[q^*]=S[q]=0.$  The second equality follows from integration by parts and final equality results from a rearrangement of the first sum on the previous line, together with the condition that $\ell[q](0)=\ell[q^*](0)=E^-(0)$ and $\ell[q](T)=\ell[q^*](T)=E^-(T)$ if $q\in X.$  Applying the complementary slackness conditions (\ref{eq:mucond1})-(\ref{eq:mucond3'}) to the final equality above, we obtain $C[q^*]-C[q]\leq 0.$
\end{proof}

\short{The reference price} $\mu^*$ \short{provides a reference cost per unit of power.  Roughly, if the cost per unit of power for operating the store is lower than the reference price, then this indicates the store should charge at that time (and similarly for discharging). }

It is worth mentioning here that exactly the same result follows if we apply the Karush-Kuhn-Tucker (KKT) conditions to \lf{the capacity constraints of the} original minimization problem~(\ref{minprob}).  By following such an approach, one instead looks for minimizers of the relaxed functional
\begin{align*}
 C[q]-\int_0^T\Big(\lambda_1(t)\left(\ell[q](t)-E^-(t)\right)+\lambda_2(t)\left(E^+(t)-\ell[q](t)\right)\Big)dt+\rho^*S[q]
\end{align*}
over all piecewise-continuous $q:[0,T]\to\re$ \lff{such that} $q(t)\in U(t)$ \lff{for all} $t\in[0,T],$ where $\lambda_1,\lambda_2:[0,T]\to\re$ are piecewise-continuous functions which satisfy the complementary slackness conditions 
$$\lambda_1(t)\left(\ell[q](t)-E^-(t)\right)=\lambda_2(t)\left(E^+(t)-\ell[q](t)\right)=0 \ \ \ \ \ \ \ \forall t\in[0,T].$$
The relation between the two approaches is that the reference price function $\mu^*$ in Proposition~\ref{prop:ps} can be expressed as an integral of the KKT functions:
$$\mu^*(t)=\int_t^T\left(\lambda_1(s)-\lambda_2(s)\right)ds$$ and, in particular, $(d\mu^*/dt)(t)=-\lambda_1(t)+\lambda_2(t).$

\lff{The following lemma shows that, in certain cases, we should expect the determination of minimizers $q^*$ to reduce to a sequence of smaller minimization problems which are localized in time.}


\begin{lemma}\label{lemma:constraints}[Localization of the minimization problem]
\lff{Let $E:[0,T]\to\re$ be the admissible domain, $U(t)\subset\re$ be the set of admissible power outputs at each time} $t\in[0,T]$ \lff{and let $C$} be a cost functional \short{of the form}
\begin{align}\label{C}
 C[q]=\int_0^TL(t,q(t)) \ dt
 \end{align}
\lff{for some running cost function} $L:\mathcal A\to\re,$ \lff{where} $\mathcal A$ \lff{is the set of time--power pairs} $(t,x)\in[0,T]\times\re$ \lff{such that} $x\in U(t).$ \lff{For each} $t\in[0,T],$ \lff{let} $\ell_1(t)=E^{-}(t)$ and $\ell_{2}(t)=E^+(t).$  Assume that there exists an optimal strategy which satisfies the conditions of Proposition~\ref{prop:ps} and \short{suppose that the optimal level of the store at some time} $\sigma_0\in[0,T]$ \short{is known to be} $\ell=\ell_i(\sigma_0),$ with $i\in\{1,2\}.$  For any $\sigma\in(\sigma_0,T],$ let $q_{\sigma}$ solve the minimization problem 
\begin{align}\label{eq:lemmamin}
\int_{\sigma_0}^{\sigma}L(t,q_{\sigma}(t)) \ dt\leq \int_{\sigma_0}^{\sigma}L(t,q(t)) \ dt
\end{align}
over all piecewise continuous $q:[\sigma_0,\sigma]\to\re$ such that $q(t)\in U(t)$ \lff{for all} $t\in[0,T]$ and \lff{such that the end conditions} $\ell[q](\sigma)=\ell_i(\sigma)$ and $\ell[q](\sigma_0)=\ell$ hold.  \short{If there exists a time} $t_0\in(\sigma_0,\sigma)$ \short{such that} $\ell[q_{\sigma}](t_0)=\ell_j(t_0),$ \lff{with} $j\in\{1,2\}\setminus\{i\},$ then there exists an optimal strategy $q^*\in X$ \short{which coincides exactly with} $q_{\sigma}$ \short{over the interval} $(\sigma_0,t_0).$
 \end{lemma}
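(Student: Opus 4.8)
The plan is to turn the integral structure of the cost into a pointwise condition and then to reconstruct a global optimum that follows $q_\sigma$. Since $C[q]=\int_0^TL(t,q(t))\,dt$ and no switching constraints are present in this setting (so $S\equiv 0$ and the $\rho^*$ term drops out), condition (i) of Proposition~\ref{prop:ps} becomes separable in time: a reference price $\mu^*$ makes $q^*$ minimize \eqref{eq:propmax} precisely when, for almost every $t$, $q^*(t)$ minimizes $x\mapsto L(t,x)-e^{\alpha t}\mu^*(t)\,x$ over $x\in U(t)$. The first fact I would record is the monotone dependence of this pointwise minimizer on the scalar $\mu^*(t)$: raising the reference price rewards larger inputs, so by a single-crossing argument the optimal $q$ can only increase with $\mu^*(t)$. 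This is what will let me compare the local and global responses.

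Next I would characterize $q_\sigma$. On $[\sigma_0,\sigma]$ the only link between different times is the single terminal requirement $\ell[q_\sigma](\sigma)=\ell_i(\sigma)$, which, writing \eqref{elldefn} as $\tfrac{d}{dt}\big(e^{\alpha t}\ell[q](t)\big)=e^{\alpha t}q(t)$, is equivalent to fixing the value of $\int_{\sigma_0}^{\sigma}e^{\alpha s}q(s)\,ds$. Attaching one scalar multiplier $\nu$ to this single constraint shows that $q_\sigma(t)$ minimizes $x\mapsto L(t,x)-e^{\alpha t}\nu\,x$ for almost every $t$; that is, $q_\sigma$ is governed by a \emph{constant} reference price $\nu$. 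A standard principle-of-optimality argument then gives that $q_\sigma|_{[\sigma_0,t_0]}$ is itself a minimizer of $\int_{\sigma_0}^{t_0}L(t,q(t))\,dt$ among strategies with $q(t)\in U(t)$ and endpoints $\ell[q](\sigma_0)=\ell_i(\sigma_0)$, $\ell[q](t_0)=\ell_j(t_0)$: were there a cheaper such path, concatenating it with $q_\sigma|_{[t_0,\sigma]}$ would beat $q_\sigma$ on the whole of $[\sigma_0,\sigma]$.

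The construction is then a cut-and-paste justified through Proposition~\ref{prop:ps}. I claim it suffices to produce a global optimum whose level equals $\ell_i(\sigma_0)$ at $\sigma_0$ and $\ell_j(t_0)$ at $t_0$. Given such an optimum $q^*$, replace $q^*|_{[\sigma_0,t_0]}$ by $q_\sigma|_{[\sigma_0,t_0]}$: the level is continuous at both junctions (so $\ell[\,\cdot\,]$, and hence admissibility, is unchanged off $[\sigma_0,t_0]$), the interior level stays within $[E^-,E^+]$ provided $t_0$ is taken to be the \emph{first} time $\ell[q_\sigma]$ meets $\ell_j$ (with a short check that it does not re-touch $\ell_i$), and by the previous paragraph the cost does not increase. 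The spliced strategy is therefore admissible and optimal, and by construction it coincides with $q_\sigma$ on $(\sigma_0,t_0)$, as required.

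The main obstacle is delivering a global optimum that actually passes through $(t_0,\ell_j(t_0))$; equivalently, I must show that the given optimum can be taken so that $\mu^*\equiv\nu$ on $(\sigma_0,t_0)$ and that its first contact with a capacity boundary after $\sigma_0$ occurs at $\ell_j$ at time $t_0$. The route I would take is an exchange argument: on the interior stretch immediately following $\sigma_0$ the global reference price is some constant $\bar\mu=\mu^*(\sigma_0^+)$ and the global response is the pointwise minimizer for $\bar\mu$; using the monotonicity from the first paragraph together with optimality, I would force $\bar\mu=\nu$, so that the global interior trajectory coincides with $q_\sigma$ and hence first meets a boundary exactly at $(t_0,\ell_j(t_0))$. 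The delicate point is the orientation of this comparison --- showing $\bar\mu$ cannot be strictly above or below $\nu$ without contradicting either optimality or the boundary complementary slackness conditions (ii)--(iii) of Proposition~\ref{prop:ps}. This is precisely where the hypothesis that $q_\sigma$ runs between the \emph{opposite} boundaries $\ell_i$ and $\ell_j$ is essential: travelling the full gap from $\ell_i$ to $\ell_j$ pins the displacement of the level, and through the monotone response pins $\nu$ relative to the price that merely holds the store on boundary $i$, yielding the required equality.
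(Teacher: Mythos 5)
Your argument defers its decisive step rather than proving it. Everything through the splice in your third paragraph is conditional on producing a globally optimal strategy whose level actually passes through $(t_0,\ell_j(t_0))$; you correctly flag this as ``the main obstacle,'' but the route you sketch for it (forcing $\bar\mu=\mu^*(\sigma_0^+)=\nu$ via monotonicity and complementary slackness) is precisely where the content of the lemma lies, and it is not carried out. Moreover, as stated the plan meets real difficulties: (a) nothing forces the given optimal pair $(\mu^*,q^*)$ to have $\mu^*$ constant on $(\sigma_0,t_0)$ --- conditions (\ref{eq:mucond2})--(\ref{eq:mucond3'}) let $\mu^*$ move whenever $\ell[q^*]$ dwells on or re-touches a capacity bound after $\sigma_0$, which an optimal trajectory may legitimately do, and it need not touch any bound at time $t_0$ at all; (b) for the non-convex $L$ allowed here the pointwise minimizer in (\ref{algmin1}) can be non-unique, so several distinct constants generate the same response and monotonicity can pin $\nu$ only up to an equivalence, not an equality; (c) even the existence of your constant multiplier $\nu$ for $q_\sigma$ is a Pontryagin-type necessary condition that itself requires proof at this level of generality. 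Completing your plan essentially amounts to reproving statement 1 in the proof of Proposition~\ref{prop:algpot2} (identifying the constant with $\sup\Lambda(t_k,m_k)$ up to the equivalence $\sim_k$), which the paper only achieves in Section~\ref{sec:alg} with substantially more machinery; invoking that machinery here would also be circular in spirit, since the present lemma is meant to be elementary and prior to it.

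The paper's own proof sidesteps the reference price entirely. Suppose no optimal strategy agrees with $q_\sigma$ on $(\sigma_0,t_0)$, and let $q^*\in X$ be optimal with $\ell[q^*](\sigma_0)=\ell_i(\sigma_0)$. First, $\ell[q^*](\sigma)\neq\ell_i(\sigma)$: otherwise replacing $q^*$ by $q_\sigma$ on $[\sigma_0,\sigma]$ (feasible, same endpoint levels, cost no larger by (\ref{eq:lemmamin})) would already give the desired optimal strategy. Hence $\ell[q^*]$ and $\ell[q_\sigma]$ agree at $\sigma_0$ but not at $\sigma$, and the set where they differ splits into maximal intervals $(a,b)$ on which the two levels agree at $a$ and $b$ and differ inside; a continuity-of-sign argument (using $\ell[q^*](t_0)\leq E^+(t_0)=\ell[q_\sigma](t_0)$ while the levels differ at $\sigma$) shows every such interval meeting $(\sigma_0,t_0]$ has $b<\sigma$. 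On each such $(a,b)$, cross-splicing is feasible in both directions, so optimality of $q^*$ in its problem and minimality of $q_\sigma$ in (\ref{eq:lemmamin}) force both restrictions to be minimizers of the same fixed-endpoint problem on $[a,b]$; one may therefore redefine $q^*$ to equal $q_\sigma$ there without losing admissibility or optimality. Doing this on all such intervals yields an optimal strategy coinciding with $q_\sigma$ on $(\sigma_0,t_0)$ --- the desired contradiction. This exchange argument needs no reference price, no monotone response, and no multiplier for $q_\sigma$; I recommend adopting it (your own paragraph-two concatenation argument is a special case of the splicing used there), or else accepting that your route requires the full Section~\ref{sec:alg} analysis.
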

 \begin{proof}
 Assume first that $\ell=\ell_1(\sigma_0)$ so that $\ell[q_{\sigma}](t_0)=\ell_2(t_0)$ and $\ell[q_{\sigma}](\sigma)=\ell_1(\sigma).$  Let $q^*\in X$ be an optimal strategy, let $q_{\sigma}$ satisfy the hypotheses above and suppose that the claim of the lemma is not true.  Then $\ell[q^*](\sigma)>\ell_1(\sigma)$ (since otherwise we could replace $q^*$ with $q_{\sigma}$ over $[0,\sigma]$ to either reduce the total cost \lff{or keep the cost the same}).  Thus there must exist at least one interval $(a,b)\subset(\sigma_0,\sigma),$ with $b<\sigma,$ such that $\ell[q^*](a)-\ell[q_{\sigma}](a)=\ell[q^*](b)-\ell[q_{\sigma}](b)=0$ and $\ell[q^*](t)\neq\ell[q_{\sigma}](t)$ for all $t\in (a,b).$  However, this implies that both $q^*$ and $q_{\sigma}$ must be minimizers of $\int_a^bL(t,q(t)) \ dt$ over all \lff{admissible} $q:[a,b]\to U$ such that $\ell[q](a)=\ell[q^*](a)$ and $\ell[q](b)=\ell[q^*](b).$  In particular, we may adapt $q^*$ if necessary so that $q^*(t)=q_{\sigma}(t)$ for all $t\in(a,b).$  Repeating this argument for all such intervals $(a,b)$ completes the proof that the optimal strategy can be chosen to coincide with $q_{\sigma}$ over $(\sigma_0,t_0).$  A similar argument holds for the case where $\ell=\ell_2(\sigma_0).$

  \end{proof}

\short{The significance of the above lemma is that $q_{\sigma}$ depends only on knowledge of a previous time $\sigma_0$ when the store is either full or empty, and on the action of the cost functional over the time interval $[\sigma_0,\sigma].$  In particular, starting at time 0, one sets $\ell=E^-(0)=E^+(0)=0$ and finds a time $\sigma\in(0,T]$ such that the minimizer $q_{\sigma}$ of (\ref{eq:lemmamin}) hits the upper capacity constraint at some time in $t_0\in(0,\sigma).$  The optimal strategy $q^*\in X$ can thus be chosen to coincide with $q_{\sigma}$ over the interval $[0,t_0].$  We then set $\ell=E^+(t_0)$ and $\sigma_0=t_0$ and repeat the process.  Continuing in this way, we eventually construct a global optimizer $q^*.$}

\section{Determination of optimal strategies with no additional constraints}\label{sec:nocon}
We assume in this section that the only constraints faced by the store are its capacity and power constraints (i.e. $\Gamma=\emptyset$).  \lff{For simplicity of notation, we assume from now on that there is a fixed set of power constraints $U\subset\re$ such that $U(t)=U$ for all} $t\in[0,T].$  \lff{All of the results extend naturally to the more general case where} $U$ \lfff{is allowed to vary with time.}  We present an algorithm to determine $\mu^*,$ and consequently the optimal strategy $q^*,$ for the class of functionals $C$ which take the form~(\ref{C}) 
for some $L:[0,T]\times U\to\re$ with $L(t,0)=0$ for all $t\in[0,T].$  We assume that, for each $t\in[0,T],$ the map $\lambda\mapsto L(t,\lambda)$ is piecewise differentiable and that the associated partial derivative $\partial L(t,\lambda)/\partial\lambda$ has a continuous inverse at almost every $\lambda\in U.$  \lf{This class of functionals contains costs which depend only on the time $t$ and the power output of the store at that time.  The piecewise differentiability assumption allows for jumps in the cost.  These jumps may relate to the cost of switching on an additional motor in order to provide a higher power output, for example.}

Proposition~\ref{prop:ps} states that, if we can find the appropriate reference price function $\mu^*,$ then the optimal strategy $q^*\in X$ solves 
\begin{align}\label{argmin}
\L[q^*,\mu^*]\leq\L[q,\mu^*] \ \ \ \ \ \ \forall \ q:[0,T]\to U,
\end{align}
where for any functions $q,\mu:[0,T]\to\re,$ we define the relaxed functional
\begin{align}\label{relaxed}
 \L[q,\mu]=\int_0^T\Big(L(t,q(t))-\lf{e^{\alpha t}}\mu(t)q(t)\Big) \ dt.
\end{align}
The regularity assumptions on $L,$ \ $q$ and $\mu$ imply that (\ref{argmin}) reduces to a minimization problem which is pointwise in time.  Specifically, for almost all $t\in[0,T],$ we search for $q^*(t)\in U$ which solves
\begin{align}\label{minptwise}
 L(t,q^*(t))-\lf{e^{\alpha t}}\mu^*(t)q^*(t)\leq L(t,w)-\lf{e^{\alpha t}}\mu^*(t)w \qquad \forall w\in U.
\end{align}

Our algorithm is a generalization of that provided in \cite{CFGZ}.  An important result is Proposition~\ref{prop:algpot2}, which states that if the algorithm does not terminate early, then it does indeed provide an optimal strategy $q^*.$  In particular, in the special case that the cost functional $C$ is strictly convex, then there exists a unique optimal strategy, and this coincides exactly with $q^*.$

\subsection{Preliminary results and definitions}
By Proposition~\ref{prop:ps}, we aim to find a piecewise differentiable $\mu^*:[0,T]\to\re$ such that the optimal strategy $q^*\in X$ solves (\ref{argmin}).  Moreover, $\mu^*$ should be constant over intervals of time where the level of the store is away from the capacity constraints.  This motivates the following construction:  

Given $t\in[0,T]$ and $\lambda\in\re,$ we want to define a quantity $u(t,\lambda)\in\re$ as a solution of
\begin{align}\label{algmin1}
u(t,\lambda)=\arg \min_{w\in U}\Big(L(t,w)-e^{\alpha t}\lambda w\Big).
\end{align}
If we know that $\mu\equiv\lambda$ over some connected interval $I\subset[0,T],$ then we hope that $q^*(t)=u(t,\lambda)$ for all $t\in I.$   Care needs to be taken, however, because there may be multiple minimizers associated with  $\lambda.$  With this in mind, we denote by $\mathcal{M}_t\subset\re$ the set of $\lambda$ which admit multiple minimizers in (\ref{algmin1}).  Note that, if $L$ is assumed to be strictly convex in its second argument, then minimizers of (\ref{eq:propmax}) are of course unique, implying that each $\mathcal{M}_t=\emptyset.$

\begin{lemma}\label{lemma:xinc}
For each $(t,\lambda)\in[0,T]\times\re,$ let $u(t,\lambda)$ be any solution of (\ref{algmin1}).  Then, at each $t\in[0,T],$ the mapping $\lambda\mapsto u(t,\lambda)$ is monotone increasing and piecewise continuous.  Moreover, the set $\mathcal{M}_t$ is finite and discontinuities in $\lambda\to u(t,\lambda)$ occur only at points in $\mathcal{M}_t.$ 
\end{lemma}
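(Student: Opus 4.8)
The plan is to prove the four assertions — monotonicity, finiteness of $\mathcal{M}_t$, the location of the discontinuities, and piecewise continuity — in that logical order, since everything after the first part leans on monotonicity. Throughout I fix $t$ and abbreviate the integrand in (\ref{algmin1}) by $g(w,\lambda)=L(t,w)-e^{\alpha t}\lambda w$.

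First I would establish monotonicity directly from the defining optimality, with no appeal to differentiability. Take $\lambda_1<\lambda_2$ and let $w_1=u(t,\lambda_1)$, $w_2=u(t,\lambda_2)$ be any associated minimizers. Writing down that $w_1$ beats $w_2$ in the $\lambda_1$-problem and that $w_2$ beats $w_1$ in the $\lambda_2$-problem and adding the two inequalities, the $L$-terms cancel and, after dividing by $e^{\alpha t}>0$, one is left with $(\lambda_1-\lambda_2)(w_1-w_2)\ge 0$. Since $\lambda_1-\lambda_2<0$ this forces $w_1\le w_2$. This supermodular rearrangement argument works for an arbitrary set $U\subset\re$ and for any choice of minimizers, which is exactly the ``any solution'' phrasing of the lemma.

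Next I would exploit the piecewise-differentiability hypothesis to write $U$ as a finite union of ordered intervals (pieces) $I_1<\cdots<I_K$ on each of which $L(t,\cdot)$ is differentiable with $\partial L/\partial w$ admitting a continuous inverse. On the closure of each piece, $g(\cdot,\lambda)$ then has a unique minimizer $m_k(\lambda)$ — interior where $\partial L/\partial w=e^{\alpha t}\lambda$, recovered through the continuous inverse, and otherwise clamped to an endpoint — and $m_k$ depends continuously on $\lambda$, with value function $V_k(\lambda)=L(t,m_k(\lambda))-e^{\alpha t}\lambda m_k(\lambda)$ continuous (indeed concave, as a pointwise infimum over $w\in I_k$ of the affine-in-$\lambda$ maps $\lambda\mapsto g(w,\lambda)$). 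The quantity in (\ref{algmin1}) is then $u(t,\lambda)=m_{k(\lambda)}(\lambda)$, where $k(\lambda)$ realizes $\min_k V_k(\lambda)$, and multiple minimizers can arise only when two distinct pieces tie for this minimum.

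The crux, and the step I expect to be the main obstacle, is the finiteness of $\mathcal{M}_t$. The clean route is to feed monotonicity back in: because the pieces are ordered and $\lambda\mapsto u(t,\lambda)$ is increasing, the active index $k(\lambda)$ is non-decreasing in $\lambda$ (if $u(t,\lambda')\ge u(t,\lambda)$ with the pieces ordered, the piece containing $u(t,\lambda')$ cannot lie below the one containing $u(t,\lambda)$). A non-decreasing, $\{1,\dots,K\}$-valued function switches value at most $K-1$ times, so there are at most $K-1$ parameter values at which the active piece changes; since on each constant-index range $u(t,\cdot)=m_{k}(\cdot)$ is single-valued and continuous, every multiple-minimizer value and every discontinuity must sit at one of these finitely many switch points, giving $|\mathcal{M}_t|\le K-1$. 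The technical friction here will be the treatment of the boundary/clamping behaviour and of the cost jumps the hypotheses permit: one must check that a tie cannot persist on a whole $\lambda$-interval (which would correspond to both branches clamping to a shared endpoint, i.e. a single common minimizer rather than genuinely distinct ones) and that the pieces can be chosen so that the continuous-inverse property, and hence uniqueness of $m_k$, holds on each. Finally, to locate the discontinuities, note that monotonicity guarantees the one-sided limits $u(t,\lambda_0^{\pm})$ exist everywhere and any discontinuity is a jump sitting at a switch point between two pieces $I_k,I_{k+1}$; continuity of the value functions forces $V_k(\lambda_0)=V_{k+1}(\lambda_0)=\min_l V_l(\lambda_0)$, so both $m_k(\lambda_0)$ and $m_{k+1}(\lambda_0)$ are global minimizers, and if the jump is genuine these are distinct, whence $\lambda_0\in\mathcal{M}_t$. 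This shows discontinuities occur only at points of $\mathcal{M}_t$, and combined with the finiteness of $\mathcal{M}_t$ it yields that $\lambda\mapsto u(t,\lambda)$ has only finitely many jumps and is therefore piecewise continuous, completing the proof.
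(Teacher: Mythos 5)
Your monotonicity step is essentially the paper's own argument in a different guise (the paper runs the same rearrangement as a contradiction rather than adding the two optimality inequalities), but for the rest of the lemma you take a genuinely different route. The paper never decomposes $U$: it obtains piecewise continuity directly from the first-order condition $\partial L/\partial w\,(t,u(t,\lambda))=e^{\alpha t}\lambda$ and the continuous-inverse hypothesis, invoking monotonicity only for $\lambda$-ranges where the minimizer sits at a discontinuity of $L$ (there the map is piecewise constant); and it proves finiteness of $\mathcal{M}_t$ by contradiction via an envelope-theorem computation --- along two distinct branches of interior minimizers the value functions have derivatives $-x_0\neq-y_0$ at an accumulation point of ties, so the branches separate and cannot keep tying. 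Your combinatorial argument (ordered pieces, concave per-piece value functions $V_k$, monotone active index, at most $K-1$ switches) replaces that local analytic step with a global count. It buys an explicit bound on $|\mathcal{M}_t|$, derives piecewise continuity as a corollary of finiteness rather than as a separate argument, and avoids the paper's somewhat delicate accumulation-point step, which tacitly assumes the tying minimizers at $\lambda_n$ lie on the continuous branches through $x_0,y_0$. Note also that your parenthetical dismissal of persistent cross-piece ties follows purely from the strong form of monotonicity you proved (any minimizer at $\lambda$ is $\leq$ any minimizer at $\lambda'>\lambda$), so your route needs no envelope computation at all.

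One sub-claim does need repair: continuous invertibility of $\partial L/\partial w$ on a piece forces the derivative to be strictly monotone, but it may be strictly \emph{decreasing}, i.e.\ $L(t,\cdot)$ strictly concave there --- a case the paper explicitly wants in scope (non-convex running costs). On such a piece the interior critical point is a maximizer, the minimizer over the closed piece is an endpoint, and $m_k$ jumps from the left to the right endpoint at the single $\lambda$ where the two affine, different-slope endpoint values cross; at that $\lambda$ the piece has two minimizers. So ``continuous inverse, hence uniqueness and continuity of $m_k$'' is false, and no refinement of the pieces fixes it. The damage is contained: each strictly concave piece contributes at most one such within-piece tie, so treating its two endpoints as separate branches your count becomes $|\mathcal{M}_t|\leq 2K-1$ and the rest of the argument goes through verbatim. (For what it is worth, the paper's own finiteness proof has the mirror-image blind spot: it only treats interior minimizers away from discontinuities of $L$ and is silent about endpoint and boundary ties.)
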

\begin{proof}
Let $t\in[0,T]$ and suppose that the map $\lambda\mapsto u(t,\lambda)$ is not monotone increasing.  Then, there exists $\lambda_1<\lambda_2$ such that $u_1:=u(t,\lambda_1)>u(t,\lambda_2)=:u_2.$  But then,
 \begin{align*}
  L(t,u_1)-e^{\alpha t}\lambda_2u_1
  &\leq L(t,u_2)-e^{\alpha t}\lambda_1u_2-e^{\alpha t}(\lambda_2-\lambda_1)u_1
  <L(t,u_2)-e^{\alpha t}\lambda_2u_2.
 \end{align*}
The first inequality follows from the definition of $u_1$ and $u_2$ as solutions to (\ref{algmin1}), and the second inequality follows from the supposition.  However, the above contradicts the definition of $u_2,$ and we conclude that the map $\lambda\mapsto  u(t,\lambda)$ is monotone increasing at all $t\in[\tau,T].$

The piecewise continuity of the map $\lambda\mapsto u(t,\lambda)$ follows immediately from the regularity assumptions on $L.$  Precisely, if for almost all $\lambda\in\re,$ the minimizers $u(t,\lambda)$ lie away from discontinuities of $L,$ then they must satisfy that
\begin{align*}
\frac{\partial L}{\partial\lambda}(t,\lambda)-e^{\alpha t}\lambda=0 \ \ \ \ \ \textnormal{a.e.} \ \lambda\in\re.
\end{align*}
The continuous invertibility assumption on the partial derivative of $L$ therefore implies the piecewise continuity of the map $\lambda\mapsto u(t,\lambda).$  If, on the other hand, there is a non-degenerate subset $W\subset\re$ such that if $\lambda\in W$ then $u(t,\lambda)$ lies at a discontinuity of $L,$ then the above monotonicity property and the piecewise continuity of $L$ imply that the map $\lambda\mapsto u(t,\lambda)$ must be piecewise constant over $W.$  In either case, the map $\lambda\mapsto u(t,\lambda)$ is piecewise continuous.

Finally, we prove the finiteness of the set $\mathcal M_t$ by supposing that the opposite is true.  To this end, let $\mathcal{A}\subset\re$ be an infinite set such that, for each $\lambda\in\mathcal{A},$ there are two distinct local solutions $x(\lambda),y(\lambda)\in U\setminus\partial U$ to (\ref{algmin1}), where $\partial U$ denotes the boundary of the set $U.$  Let $\lambda_0\in\mathcal{A}$ be such that there exists a sequence $(\lambda_n)_{n\in\n}$ in $\re$ with $\lim_{n\to\infty}\lambda_n=\lambda_0,$ and set $x(\lambda_0)=x_0$ and $y(\lambda_0)=y_0.$  Assume without loss of generality that
\begin{align*}
L_u(t,x(\lambda))=L_u(t,y(\lambda))=e^{\alpha t}\lambda
\end{align*}
is satisfied at $\lambda=\lambda_0,$ where $L_u(t,\cdot)$ denotes the partial derivative of $L$ with respect to the second argument, and that $x(\lambda)$ and $y(\lambda)$ lie away from any discontinuity of $L.$  Setting $g(\lambda):=L(t,x(\lambda))-\lambda x(\lambda)$ and $h(\lambda):=L(t,y(\lambda))-\lambda y(\lambda),$ we obtain
\begin{align*}
 g'(\lambda_0)&=L_u(t,x_0)x'(\lambda_0)-x'(\lambda_0)\lambda_0-x_0=\left(L_u(t,x_0)-\lambda_0\right)x'(\lambda_0)-x_0=-x_0
\end{align*}
and similarly, $h'(\lambda_0)=-y_0.$  Hence, if $x_0$ and $y_0$ are both global minimizers solving (\ref{algmin1}) but $x_0\neq y_0,$ then there exists $N\in\n$ such that $g(\lambda_n)\neq h(\lambda_n)$ for all $n\geq N.$  In particular, $x(\lambda_n)$ and $y(\lambda_n)$ cannot both be minimizers for $n\geq N.$  This contradicts the assumption that there exists such a set $\mathcal{A}.$
\end{proof}

The definitions which follow will be employed in the algorithm, and are written under the assumption that there exists a pair $(\mu^*,q^*)$ which satisfies the complementary slackness conditions of Proposition~\ref{prop:ps}.  Condition~(\ref{eq:mucond1}) motivates the following definition.

\begin{defn}[$(\lambda,\tau)$-admissibility]
For any choice of $\lambda\in\re$ and $\tau\in[0,T],$ we say a piecewise continuous function $x:[0,T]\rightarrow U$ is $(\lambda,\tau)-$admissible if $x\equiv 0$ over $[0,\tau]$ and if, for each $t\in[0,T],$ $x(t)$ is a solution of
\begin{align}\label{algmin}
x(t)=\textnormal{arg}\min_{w\in U}\Big(L(t,w)- e^{\alpha t}\lambda w\Big).
\end{align}
In particular, the mapping $t\mapsto x(t)$ coincides with $t\mapsto u(t,\lambda)$ over $[\tau,T],$ for some choice of map $u$ which solves (\ref{algmin1}) at each $t\in[0,T].$
\end{defn}

Note that, in general, a $(\lambda,\tau)-$admissible strategy $x$ is not admissible: for a general choice of $\lambda,$ one of the capacity constraints is likely to be broken at some time in $(\tau,T].$  The idea of the algorithm is to piece together $(\lambda,\tau)-$admissible strategies in order to construct the optimal strategy $q^*\in X.$  \short{The reference price} $\mu^*$ \short{attains the appropriate value of} $\lambda$ \short{over intervals of time when} $\mu^*$ \short{is constant}  
In the special case that one can find a $(\lambda,0)-$admissible strategy which is also admissible, then of course we are done, since we may set $\mu^*\equiv\lambda.$  Such a strategy satisfies condition~(\ref{eq:mucond1}) at every $t\in[0,T].$

The key feature of the algorithm is thus to determine: (i) the intervals of time over which $\mu^*$ is constant and (ii) the value that $\mu^*$ attains over these intervals.  \short{Intuitively, we expect the store to completely fill whenever costs are sufficiently low.  The store will then wait for costs to rise before fully discharging.  Correspondingly, the reference price should increase over this waiting period, from} $\lambda_1$ say to $\lambda_2$.  \short{If we did not allow for this increase in reference price, then the associated strategy would cause the store to begin discharging too soon and to empty too much, thus breaking the lower capacity constraint in order to take advantage of the low costs.  This behaviour is an indication that} $\lambda_1$ \short{is the correct choice of reference price over the charging period.  This motivates the following characterization of strategies.}

\begin{defn}[Characterization of $(\lambda,\tau)$-admissible strategies]\label{fnset}
Let $\tau\in[0,T],$ \ $\lambda\in\re$ and $m\in[0,M].$  We characterize each $(\lambda,\tau)-$admissible $x:[0,T]\to U$ according to sets $X(\tau,m)$ and $X'(\tau,m)$ as follows:     
\begin{enumerate}[(i)]
 \item We write $x\in X(\tau,m)$ if there exists $t\in(\tau,T)$ and $\varepsilon_0>0$ such that 
  \begin{align*}
m+\ell[x](t)&=E^-(t) \qquad \textnormal{and} \qquad  m+\ell[x](t+\varepsilon)<E^-(t+\varepsilon)
\end{align*}
for all $\varepsilon\in(0,\varepsilon_0),$ with 
\begin{align}\label{defncond}
E^-(t')\leq m+\ell[x](t')\leq E^+(t') \qquad \forall t'\in(\tau,t). 
\end{align}

In other words, if the level of the store at time $\tau$ is $m,$ and if the store adopts the strategy $x$ over $(\tau,T],$ then the first violation of a capacity constraint occurs at the lower bound of $E.$  We write $t=t[x,m].$ 
\item Similarly, we write $x\in X'(\tau,m)$ if there exists $t\in(\tau,T)$ and $\varepsilon_0>0$ such that condition~(\ref{defncond}) holds and 
  \begin{align*}
m+\ell[x](t)&=E^+(t) \qquad \textnormal{and} \qquad  m+\ell[x](t+\varepsilon)>E^+(t+\varepsilon)
\end{align*}
for all $\varepsilon\in(0,\varepsilon_0).$  In other words, if the level of the store at time $\tau$ is $m,$ and if the store adopts the strategy $x$ over $(\tau,T],$ then the first violation of a capacity constraint occurs at the upper bound of $E.$  We write $t=t[x,m].$
\item We say $\lambda\in\Lambda(\tau,m)$ if there exists a $(\lambda,\tau)-$admissible $x\in X(\tau,m).$  Similarly, $\lambda\in\Lambda'(\tau,m)$ if there exists a $(\lambda,\tau)-$admissible $x\in X'(\tau,m).$


\end{enumerate}

\end{defn}

Lemma~\ref{lemma:xinc} implies that $\Lambda(\tau,m)$ and $\Lambda'(\tau,m)$ are each connected subintervals of $\re$ which satisfy
\begin{align}\label{infsup}
\sup\Lambda(\tau,m)\leq\inf\Lambda'(\tau,m) \ \ \ \ \ \ \ \ \ \ \ \ \ \forall (\tau,m)\in[0,T]\times[0,M].  
\end{align}
In particular, the interiors of the two sets are disjoint:
\begin{align}\label{eq:int}
 \textnormal{int}(\Lambda(\tau,m))\cap\textnormal{int}(\Lambda'(\tau,m))=\emptyset  \ \ \ \ \ \ \ \ \ \ \ \ \ \forall (\tau,m)\in[0,T]\times[0,M].  
\end{align}

\subsection{The algorithm}\label{sec:alg}

We are now in a position to outline the algorithm.  To this end, fix $\tau\in[0,T)$ and suppose that $\mu^*$ and $q^*$ are known over $[0,\tau].$  We assume without loss of generality that $m:=\ell[q^*](\tau)\in\{E^-(\tau),E^+(\tau)\}$ (otherwise, move $\tau$ backwards until this is satisfied). The algorithm will identify an $N\in\n$ and two increasing sequences of times $\{\tau_i\}_{i=1}^N$,$\{\sigma_i\}_{i=1}^N$ such that $\tau_i\leq\sigma_i$ for each $i,$ and such that $(d/dt)\mu^*(t)=0$ whenever $t\in(\tau_i,\sigma_i).$  The reference price $\mu^*$ is only allowed to jump in value at times of the form $\tau_i$ or $\sigma_i.$  We may assume without loss of generality that $\tau=\tau_k,$ for some $k\in\{1,\ldots,N\}$ (otherwise, again, move $\tau$ backwards until this is true).  \short{Recall from Definition~\ref{fnset} that} $t[x,m]$ \short{is the time at which a strategy $x$ first breaks a capacity constraint, given that} $x(0)=m.$\\
\\
\textbf{Step 1:} If there exists $\lambda\in\re$ and a $(\lambda,\tau_k)-$admissible $x$ such that $x\notin X(\tau_k,m)\cup X'(\tau_k,m),$ then set \lf{$\sigma_k=T.$  Set} $\mu^*(t)=\lambda$ for all $t\in(\tau,T]$ and define the restriction of $q^*$ to $(\tau_k,T]$ to be $x.$  \textbf{The algorithm is complete.}

If there is no $\lambda$ satisfying these conditions, proceed to step 2.\\
 \textbf{Step 2:}  Set $\lambda:=\sup\Lambda(\tau_k,m).$ 
 There are three cases.  For each case, we define the pair $(\mu^*,q^*)$ restricted to an interval $(\tau_k,\sigma_k]\subset(\tau_k,T].$ 

 \begin{itemize}
 \item[a)] If $\lambda\in\Lambda(\tau,m)\setminus\Lambda'(\tau,m):$ Choose a $(\lambda,\tau_k)-$admissible $x$ such that $m+\ell[x](\sigma_k)=E^+(\sigma_k)$ for some  $\sigma_k\in[\tau_k,t[x,m]),$ and select the latest such $\sigma_k.$  Set $\mu^*(t)=\lambda$ for all $t\in(\tau_k,\sigma_k]$ and define the restriction of $q^*$ to $(\tau_k,\sigma_k]$ to coincide with $x.$  \lf{If $\sigma_k<T,$ proceed to step 3; if $\sigma_k=T,$ proceed to step 4.}

 
 \item[b)] If $\lambda\in\Lambda'(\tau,m)\setminus\Lambda(\tau,m):$  Choose a $(\lambda,\tau_k)-$admissible $x$ such that $m+\ell[x](\sigma_k)=E^-(\sigma_k)$ for some  $\sigma_k\in[\tau_k,t[x,m]),$ and select the latest such $\sigma_k.$  Set $\mu^*(t)=\lambda$ for all $t\in(\tau_k,\sigma_k]$ and define the restriction of $q^*$ to $(\tau_k,\sigma_k]$ to coincide with $x.$  \lf{If $\sigma_k<T,$ proceed to step 3; if $\sigma_k=T,$ proceed to step 4.}
 

 \item [c)] If $\lambda\in\Lambda(\tau,m)\cap\Lambda'(\tau,m):$  Choose a $(\lambda,\tau_k)-$admissible $x$ such that either $m+\ell[x](\sigma_k)=E^+(\sigma_k)$ or $m+\ell[x](\sigma_k)=E^-(\sigma_k)$ for some  $\sigma_k\in[\tau_k,t[x,m]),$ and select the latest such $\sigma_k.$  Set $\mu^*(t)=\lambda$ for all $t\in(\tau_k,\sigma_k]$ and define the restriction of $q^*$ to $(\tau_k,\sigma_k]$ to coincide with $x.$  \lf{If $\sigma_k<T,$ proceed to step 3; if $\sigma_k=T,$ proceed to step 4.}

 
 \end{itemize}
 \textbf{Step 3:}  \lf{Let $\tau_{k+1}$ be the first time in $[\sigma_k,T]$ such that, on relabeling $\tau_{k+1}$ as $\tau_k$ and $m+\ell[x](\tau_{k+1})$ as $m,$ and on determining the new associated $\sigma_k$ from steps 1 and 2, we have $\sigma_k\neq\tau_k.$}  Relabel $\tau_{k+1}$ as $\tau_k$ and return to step 1.
 
 If there is no such $\tau_{k+1},$ then we have found all intervals $(\tau_i,\sigma_i)$ over which $(d/dt)\mu^*=0.$  Proceed to step 4.\\
 \textbf{Step 4:}  Over each interval $(\sigma_i,\tau_{i+1}],$ corresponding to the times found in the steps above, and for each $t\in(\sigma_i,\tau_{i+1}],$ define
 \begin{align*}
  q^*(t)=\left\{\begin{array}{lll}
                 \frac{d}{dt} E^+(t)+\alpha E^+(t) & & \textnormal{if} \ \ell[q](\sigma_i)=E^+(\sigma_i)\\
                 \frac{d}{dt} E^-(t)+\alpha E^-(t) & & \textnormal{if} \ \ell[q](\sigma_i)=E^-(\sigma_i).
                 \end{array}
\right.
  \end{align*}
  \textbf{The algorithm is complete} if, for each each $i\in\{1,\ldots,N\},$ there exists a piecewise differentiable $\mu_i:(\sigma_i,\tau_{i+1})\to\re$ such that the following conditions hold:
 \begin{itemize}
 \item $q^*(t)$ is a minimizer of $L(t,x)-\mu_i(t)x$ over all $x\in\re.$
 \item If $\ell[q^*](\sigma_i)=E^+(\sigma_i),$ then $\mu_i$ is non-decreasing over $(\sigma_i,\tau_{i+1})$ and satisfies that $\mu_i(\sigma_i^+)\geq\mu^*(\sigma_i^-)$ and $\mu_i(\tau_{i+1}^-)\leq\mu^*(\tau_{i+1}^+).$ 
 \item If $\ell[q^*](\sigma_i)=E^-(\sigma_i),$ then $\mu^*$ is non-increasing over $(\sigma_i,\tau_{i+1}]$ and satisfies that $\mu_i(\sigma_i^+)\leq\mu^*(\sigma_i^-)$ and $\mu_i(\tau_{i+1}^-)\geq\mu^*(\tau_{i+1}^+).$
 \end{itemize}
If these conditions do not hold, the algorithm ends here and yields no solution $q^*.$\\
 \textbf{End of algorithm}\\
The above algorithm implicitly reduces the original optimization problem to a series of new optimization problems which are localized in time.  At each time $\tau_k,$ one only needs to look ahead to a time $t_k$ to know how to operate the store over $(\tau_k,\sigma_k),$ where $t_k$ is defined as $t[x,m]$ for the appropriate choice of $x$ and $m$ from step 2 of the algorithm (or as the the minimum value of $t[x,m]$ whenever there there is a choice of $x$).  Hence, whilst we originally assumed knowledge of the entire price function $p:[0,T]\to[0,\infty),$ in practice we may not need so much information.

The following result states that, provided the algorithm does not terminate early, then it does indeed give an optimal strategy $q^*.$  Proposition~\ref{prop:algpot2} then states the converse: if there is an optimal strategy, which satisfies the conditions of Proposition~\ref{prop:ps}, then the algorithm will find it (in other words, the algorithm will not terminate early).
\begin{prop}\label{prop:algopt}
If the above algorithm yields a strategy $q^*:[0,T]\to U,$ then $q^*$ is an optimal strategy.  
\end{prop}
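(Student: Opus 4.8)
The plan is to show that the triple $(\rho^*,\mu^*,q^*)$ assembled by the algorithm satisfies every hypothesis of Proposition~\ref{prop:ps}, so that optimality of $q^*$ is immediate. Since we work throughout Section~\ref{sec:nocon} with $\Gamma=\emptyset$, the indicator functional satisfies $S[q]\equiv 0$, so the term $\rho^*S[q]$ in (\ref{eq:propmax}) vanishes for every $q$; we may therefore take $\rho^*=0$ and reduce condition (i) of Proposition~\ref{prop:ps} to the assertion that $q^*$ minimizes the relaxed functional $\L[\,\cdot\,,\mu^*]$ over all piecewise continuous $q:[0,T]\to U$. As noted after (\ref{relaxed}), the regularity assumptions on $L$ collapse this to the pointwise condition (\ref{minptwise}). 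Thus the proof is a verification, piece by piece over the intervals produced by the algorithm, of (a) admissibility $q^*\in X$, (b) the pointwise minimization (\ref{minptwise}), and (c) the complementary slackness conditions (\ref{eq:mucond1})--(\ref{eq:mucond3'}), for the function $\mu^*$ that equals the constant $\lambda$ on each $(\tau_i,\sigma_i)$ and the Step~4 function $\mu_i$ on each $(\sigma_i,\tau_{i+1})$.

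For admissibility and the pointwise minimization I would treat the two families of intervals separately. On a constant interval $(\tau_i,\sigma_i)$ the restriction of $q^*$ is a $(\lambda,\tau_i)$-admissible strategy $x$; by Lemma~\ref{lemma:xinc} it is piecewise continuous and takes values in $U$, so (\ref{minptwise}) holds there directly from the defining relation (\ref{algmin}), while the level is kept inside $[E^-,E^+]$ by (\ref{defncond}) together with the choice $\sigma_i<t[x,m]$. On a boundary interval $(\sigma_i,\tau_{i+1})$ the strategy is given by the tracking formula of Step~4, which by (\ref{elldefn}) pins $\ell[q^*]$ to the corresponding constraint $E^\pm$; admissibility of these values in $U$ is exactly the standing model assumption $\{(d/dt)E^\pm+\alpha E^\pm\}\subset U$, and the first bullet of Step~4 furnishes a $\mu_i$ for which (\ref{minptwise}) holds. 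Because $\ell[q^*]$ is an integral of $q^*$ it is continuous across every junction, so the pieces glue into a single admissible $q^*\in X$ with finitely many pieces.

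The substance of the argument is the complementary slackness check. On each constant interval $(d/dt)\mu^*=0$, which satisfies all three of (\ref{eq:mucond1})--(\ref{eq:mucond3}) simultaneously regardless of the level. On a boundary interval tracking $E^+$ (resp.\ $E^-$), the monotonicity clause of Step~4 gives $(d/dt)\mu^*\geq 0$ (resp.\ $\leq 0$), matching (\ref{eq:mucond2}) (resp.\ (\ref{eq:mucond3})) since $\ell[q^*]$ sits on that constraint. For the jump conditions I would observe that $\mu^*$ is permitted to jump only at the times $\sigma_i$ and $\tau_i$, and that at each such time $\ell[q^*]$ lies on a capacity constraint --- at $\sigma_i$ by the construction in Step~2 and at $\tau_{i+1}$ because the store has just been tracking $E^\pm$ --- so (\ref{eq:mucond1'}) is vacuous. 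The one-sided inequalities $\mu_i(\sigma_i^+)\geq\mu^*(\sigma_i^-)$ and $\mu_i(\tau_{i+1}^-)\leq\mu^*(\tau_{i+1}^+)$ (and their reverses in the $E^-$ case) then translate precisely into the correct signs of $\mu^*(t^+)-\mu^*(t^-)$ required by (\ref{eq:mucond2'}) and (\ref{eq:mucond3'}).

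I expect the junction analysis to be the main obstacle: one must confirm that the level genuinely attains the expected constraint at every discontinuity of $\mu^*$ and that the sign of each jump aligns with whether the store is full or empty, while also checking consistency of the reference price where a constant interval abuts a boundary interval, so that $\mu_i$ matches the adjacent constant value up to an admissible jump. A secondary technical point is reconciling the $e^{\alpha t}$ weighting in (\ref{minptwise}) with the unweighted minimization written in Step~4, which is handled by absorbing the factor into $\mu_i$. Once these are in place, every hypothesis of Proposition~\ref{prop:ps} holds for $(0,\mu^*,q^*)$, and that proposition yields optimality of $q^*$.
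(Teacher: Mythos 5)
Your overall strategy is the same as the paper's: take $\rho^*=0$ (legitimate, since $\Gamma=\emptyset$ in Section~\ref{sec:nocon} makes $S\equiv 0$), and verify the hypotheses of Proposition~\ref{prop:ps} for the pair $(\mu^*,q^*)$ produced by the algorithm, treating the constant-price intervals $(\tau_i,\sigma_i)$ and the constraint-tracking intervals $(\sigma_i,\tau_{i+1})$ separately. Your handling of the interior conditions (\ref{eq:mucond1})--(\ref{eq:mucond3}) and of the junctions adjacent to a Step~4 interval is correct and matches the paper.

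The genuine gap is at junctions where $\sigma_k=\tau_{k+1}$, i.e.\ where the level touches a capacity constraint at $\sigma_k$ and a \emph{new constant-price interval begins immediately}, with no tracking interval in between. In that case there is no function $\mu_i$ and no Step~4 inequalities, so the mechanism you invoke for the jump conditions simply does not exist, and nothing in your plan controls the sign of $\mu^*(\sigma_k^+)-\mu^*(\sigma_k^-)$. This case is precisely where the paper's proof does its real work. In case a) of Step~2, since $\sigma_k$ is the \emph{latest} time before $t[x,m]$ at which the level touches $E^+$, the continued strategy viewed from the state $(\sigma_k,E^+(\sigma_k))$ first violates the \emph{lower} constraint, which gives the set membership $\mu^*(\sigma_k^-)\in\Lambda(\sigma_k,E^+(\sigma_k))$ (this is (\ref{proof1})); combining this with the fact that the algorithm chooses the next constant value as $\sup\Lambda$ of the new starting state, recorded in (\ref{lambdamu}), yields
\begin{align*}
\mu^*(\sigma_k^-)\leq\sup\Lambda\bigl(\sigma_k,E^+(\sigma_k)\bigr)\leq\mu^*(\sigma_k^+),
\end{align*}
which is exactly (\ref{eq:mucond2'}); case b) is symmetric via $\Lambda'$ and gives (\ref{eq:mucond3'}), and case c) combines the two. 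This argument rests on the monotonicity structure of Lemma~\ref{lemma:xinc} through the sets $\Lambda,\Lambda'$, and without it (or an equivalent substitute) your verification does not close. Your secondary worry about the $e^{\alpha t}$ weighting in Step~4 is real but minor, and the paper glosses over it in the same way; the missing constant-meets-constant junction argument is the substantive defect.
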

\begin{proof}
Let $(\mu^*,q^*)$ be the pair which is determined through the algorithm and let $\{\tau_i\}_{i=1}^N$ and $\{\sigma_i\}_{i=1}^T$ be the associated sequences of time, so that $\mu^*$ is constant over each interval $(\tau_i,\sigma_i).$  Assume that the conditions of Proposition~\ref{prop:ps} are satisfied up until time $\tau_k\in[0,T).$  At each $i\in\{1,\ldots,N\},$ we have
\begin{align}\label{lambdamu}
 \sup\Lambda(\tau_i,\ell[q^*](\tau_i))\leq\mu^*(\tau_i^+)\leq\inf\Lambda'(\tau_i,\ell[q^*](\tau_i)).
\end{align}
  If the conditions of step 1 are satisfied at $\tau_k,$ then $q^*$ is clearly admissible and satisfies the properties of the proposition.  Hence, $q^*$ is optimal and the algorithm is complete.  

Assume therefore that the conditions of step 1 do not hold at $\tau_k$ so that $\sup\Lambda(\tau_k,m)=\inf\Lambda'(\tau_k,m).$  It is clear that the strategy defined by the algorithm is admissible, and it remains to check that conditions (\ref{eq:mucond1})-(\ref{eq:mucond3'}) are satisfied by $\mu^*.$  

To this end, note that, the algorithm ensures that the conditions are satisfied over the intervals $(\tau_k,\sigma_k)$ and $(\sigma_k,\tau_{k+1}],$ and it remains to check that the conditions hold at $\sigma_k.$  Suppose first that case a) of step 2 is satisfied at time $\tau_k,$ so that $\ell[q^*](\sigma_k)=E^+(\sigma_k).$  Then, by construction, we have 
\begin{align}\label{proof1}
\mu^*(\sigma_k^-)\in\Lambda(\sigma_k,E^+(\sigma_k)). 
\end{align}
Thus, if $\sigma_k=\tau_{k+1}$ then, together with (\ref{lambdamu}), this implies
\begin{align}\label{proof2}
\mu^*(\sigma_k^-)\leq\sup\Lambda(\sigma_k,E^+(\sigma_k))\leq\mu^*(\sigma_k^+) 
\end{align}
and condition (\ref{eq:mucond2'}) is satisfied at $\sigma_k.$  If, on the other hand, $\sigma_k<\tau_{k+1},$ then the construction of $\mu^*$ through the algorithm immediately implies that $\mu^*(\sigma_k^-)\leq\mu^*(\sigma_k^+).$  

Similarly, if case b) of step 2 holds at $\tau_k,$ then $\ell[q^*](\sigma_k)=E^-(\sigma_k)$ and  
\begin{align}\label{proof3}
 \mu^*(\sigma_k)\in\Lambda'(\sigma_k,E^-(\sigma_k)).
\end{align}
Thus, if $\sigma_k=\tau_{k+1}$ then, together with (\ref{lambdamu}), this implies
\begin{align}\label{proof4}
 \mu^*(\sigma_k)\geq\inf\Lambda'(\sigma_k,E^-(\sigma_k))\geq\mu^*(\sigma_k^+)
\end{align}
and condition (\ref{eq:mucond3'}) is satisfied at $\sigma_k.$
If, on the other hand, $\sigma_k<\tau_{k+1},$ then the construction of $\mu^*$ through the algorithm immediately implies that $\mu^*(\sigma_k^-)\geq\mu^*(\sigma_k^+).$  

Finally, if case c) holds at $\tau,$ then the above two cases together imply that the conditions of Proposition~\ref{prop:ps} are satisfied.
\end{proof}

\begin{prop} \label{prop:algpot2}
If the algorithm terminates early, then there is no pair $(\mu^*,q^*)$ which satisfies the conditions of Proposition~\ref{prop:ps}. 
\end{prop}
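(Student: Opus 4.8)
The plan is to prove the contrapositive: assuming that a pair satisfying the conditions of Proposition~\ref{prop:ps} does exist, I will show that the algorithm cannot ``yield no solution'', i.e. it runs through to completion (either in Step~1 or after the verification in Step~4), which is exactly the negation of early termination. Throughout I keep $(\mu^*,q^*)$ for the objects the algorithm constructs and write $(\mubar,\qbar)$ for the given admissible optimum. The argument is an induction on the index $k$ of the times $\tau_k$ generated by the algorithm, carrying the invariant that, up to $\tau_k$, the constructed pair $(\mu^*,q^*)$ already meets all hypotheses of Proposition~\ref{prop:ps} and that the common level $\ell[q^*](\tau_k)=\ell[\qbar](\tau_k)$ sits on a capacity bound, i.e. $\ell[q^*](\tau_k)\in\{E^-(\tau_k),E^+(\tau_k)\}$.

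The first ingredient is the analogue of the sandwich estimate (\ref{lambdamu}), now asserted for the given optimum: with $m=\ell[\qbar](\tau_k)$, one has $\sup\Lambda(\tau_k,m)\le\mubar(\tau_k^+)\le\inf\Lambda'(\tau_k,m)$. I would obtain this from the complementary slackness conditions (\ref{eq:mucond1})--(\ref{eq:mucond3'}) together with the monotonicity of $\lambda\mapsto u(t,\lambda)$ in Lemma~\ref{lemma:xinc}, by the reasoning that a strictly smaller (resp. larger) constant reference price would drive the optimal level across the lower (resp. upper) bound, contradicting admissibility of $\qbar$. Granting the estimate, either Step~1 applies at $\tau_k$ and the algorithm completes at once, or no $(\lambda,\tau_k)$-admissible strategy avoids the constraints; by (\ref{infsup}) the latter forces $\sup\Lambda(\tau_k,m)=\inf\Lambda'(\tau_k,m)$, and the sandwich then pins $\mubar(\tau_k^+)$ to this common value. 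Hence the value $\lambda$ selected in Step~2 equals the optimal reference price and exactly one of the cases a), b), c) applies. Lemma~\ref{lemma:constraints} then lets me take the $(\lambda,\tau_k)$-admissible segment chosen in Step~2 to coincide with $\qbar$ on $(\tau_k,\sigma_k)$ and guarantees that the prescribed capacity bound is reached at some $\sigma_k>\tau_k$; in particular the required $\sigma_k$ exists, so Steps~2 and~3 always produce a nondegenerate segment.

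It then remains to show that the verification built into Step~4 never fails, since this is the only exit at which the algorithm returns no strategy. On each riding interval $(\sigma_i,\tau_{i+1})$ the optimal level $\ell[\qbar]$ is pinned to one capacity bound, so $\qbar(t)=\frac{d}{dt}E^\pm(t)+\alpha E^\pm(t)$ there, which is precisely the $q^*$ prescribed by Step~4. I would exhibit the witness $\mu_i:=\mubar|_{(\sigma_i,\tau_{i+1})}$: it is piecewise differentiable, it makes $q^*(t)$ a pointwise minimiser of $L(t,\cdot)-\mu_i(t)(\cdot)$ by part~(i) of Proposition~\ref{prop:ps} applied to $\qbar$, and the complementary slackness conditions (\ref{eq:mucond2})/(\ref{eq:mucond2'}) (resp. (\ref{eq:mucond3})/(\ref{eq:mucond3'})) deliver exactly the required monotonicity and the endpoint inequalities $\mu_i(\sigma_i^+)\ge\mu^*(\sigma_i^-)$ and $\mu_i(\tau_{i+1}^-)\le\mu^*(\tau_{i+1}^+)$ (and their decreasing counterparts). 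Thus all Step~4 conditions hold, the algorithm does not abort, and the induction closes with $\ell[q^*](\tau_{k+1})$ again on a bound. This contradicts early termination and proves the proposition.

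The hard part will be the bookkeeping that aligns the algorithm's greedy segmentation $\{(\tau_i,\sigma_i)\}$ with the structure of the given optimum $\qbar$, particularly in the presence of non-unique pointwise minimisers (the sets $\mathcal{M}_t$): I must check that the ``latest $\sigma_k$'' selected in Step~2 neither overshoots nor falls short of what the optimum permits, so that the boundary-value invariant is preserved and the restriction of $\mubar$ genuinely furnishes the Step~4 witness. Making the sandwich estimate rigorous, and invoking Lemma~\ref{lemma:constraints} to replace $\qbar$ by the algorithm's segment on $(\tau_k,\sigma_k)$ without changing optimality, is where the real work lies; the monotonicity and endpoint inequalities demanded in Step~4 are then essentially a restatement of complementary slackness for $\mubar$.
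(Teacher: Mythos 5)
Your overall plan is in fact the paper's plan: prove the contrapositive, induct along the algorithm's times, identify the Step~2 value $\lambda$ with the optimum's reference price via Lemma~\ref{lemma:xinc} and complementary slackness, and use the restriction of $\mubar$ as the Step~4 witness. The problem is that what you set aside as ``bookkeeping'' is the actual content of the paper's proof, and your shortcuts for it do not hold up. The paper proves three statements: (1) the optimum's constant value $\lambda_k$ on each maximal interval $(t_k,s_k)$ is equivalent (in a relation $\sim_k$ introduced precisely to handle degenerate intervals on which $q\equiv 0$ and the multiplier is non-unique) to $\sup\Lambda(t_k,m_k)$; (2) every $t_k$ coincides with one of the algorithm's $\tau_j$; (3) $s_k=\sigma_j$ and, crucially, $\sigma_j$ is independent of which of several $(\lambda,\tau)$-admissible strategies the algorithm picks, which is where the ``latest $\sigma_k$'' rule and monotonicity do real work. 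Each of these is a case analysis against conditions (\ref{eq:mucond1})--(\ref{eq:mucond3'}); none is automatic. Without them your induction invariant is circular: your sandwich estimate $\sup\Lambda(\tau_k,m)\le\mubar(\tau_k^+)\le\inf\Lambda'(\tau_k,m)$ presupposes that $\mubar$ begins a constant stretch exactly at $\tau_k$ (otherwise $\mubar(\tau_k^+)$ is a value taken inside a riding interval and the estimate can fail), but that the algorithm's $\tau_k$ are exactly the optimum's $t_j$ is what alignment must prove. The paper also needs a normalization you omit --- always take $\mu$ constant when there is a choice --- to make the maximal-interval decomposition of the optimum well defined before any comparison can start.

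The one step that concretely fails is your appeal to Lemma~\ref{lemma:constraints}. That lemma is about minimizers of the cost functional over $[\sigma_0,\sigma]$ subject to prescribed end levels, not about $(\lambda,\tau_k)$-admissible strategies, which are pointwise minimizers of the relaxed Lagrangian; identifying the two is not given (it is essentially an application of Proposition~\ref{prop:ps} itself, with the correct multiplier in hand). Moreover its conclusion produces \emph{some} optimal strategy agreeing with the localized minimizer on $(\sigma_0,t_0)$ --- it does not produce a pair satisfying the conditions of Proposition~\ref{prop:ps}, which is what your invariant requires; since the proposition under proof is precisely about the existence of such pairs, you cannot freely replace $\qbar$ by another optimum and keep arguing with its (now nonexistent) multiplier. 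The paper avoids this entirely: it never forces the algorithm's segment to coincide with $\qbar$ pointwise on $(\tau_k,\sigma_k)$, but instead shows (Statement 3) that every admissible choice in Steps 1--2 exits at the same time $\sigma_j$ with the same level, so agreement with $\qbar$ is only needed at segment endpoints, with the strategies between $\sigma_j$ and $\tau_{j+1}$ pinned to the capacity bound and hence equal.
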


\begin{proof}
Let $(\mu^*,q^*)$ be a pair generated by the algorithm.  (If the algorithm terminates early, at a time $T_0$ say, then $\mu^*$ and $q^*$ are both considered as functions over the domain $[0,T_0].$)  
Let $\{\tau_i\}_{i=1}^{N^*}$ and $\{\sigma_i\}_{i=1}^{N^*}$ be the sequences of time generated by the algorithm, so that $\mu^*$ is constant over each $(\tau_k,\sigma_k).$  We write $m^*_k:=\ell[q^*](\tau_k)$ 
and assume that each interval $(\tau_k,\sigma_k)$ is maximal in the sense that if $(\tau,\sigma)\supset(\tau_k,\sigma_k)$ is a strictly larger interval, then $\mu^*$ is not constant over this interval.  

Suppose also that there is a pair $(\mu,q)$ which satisfies the conditions of Proposition~\ref{prop:ps}.  Analogously, let $\{t_i\}_{i=1}^N$ and $\{s_i\}_{i=1}^N$ be increasing sequences which define the end-times of all maximal intervals $(t_k,s_k)$ over which $\mu$ is constant.  We write $m_k:=\ell[q](t_k)$ and
\begin{align*}
\lambda_k&:=\mu(t) \ \ \ \ \ \ \forall t\in(t_k,s_k).
\end{align*}
We make the following additional assumptions on the pair $(\mu,q)$: Suppose there is a pair of times $\tau,\tau'\in[0,T]$ such that $\tau<\tau'$ and $\ell[q](t)\in\{E^-(t),E^+(t)\}$ for each $t\in\{\tau,\tau'\}.$  If there exists a pair $(\mu',q')$ such that (i) the conditions of Proposition~\ref{prop:ps} are satisfied, (ii) $\ell[q'](\tau)=\ell[q](\tau)$ and $\ell[q'](\tau')=\ell[q](\tau'),$ and (iii) $(d/dt)\mu'(t)=0$ for all $t\in(\tau,\tau'),$ then we assume that $(\mu,q)$ agrees with $(\mu',q')$ over the larger interval $[0,\tau').$  In other words, we always take $\mu$ to be constant whenever there is the choice.

Throughout this proof, we will use the notation $\lambda\sim_k\lambda'$ if the set of $(\lambda,t_k)-$admissible functions coincides exactly with the set of $(\lambda',t_k)-$admissible functions over the interval $(t_k,s_k).$  This assumption is only required to handle the degenerate cases where $q$ is identically zero over $(t_k,s_k).$  In these cases, there may be a choice of values to assign to $\mu,$ each of which would result in the same $(\lambda_k,t_k)-$admissible $q$ over $(t_k,s_k).$ 
We introduce the ordering $\lambda\prec_k\lambda'$ if $\lambda\nsim_k\lambda'$ and $\lambda<\lambda'.$


The proof consists of proving that the following statements are true at each $k\in\{1,\ldots,N\}$:
\begin{enumerate}
\item $\lambda_k\sim_k\sup\Lambda(t_k,m_k)$ and $q$ is $(\lambda_k,t_k)-$admissible over the interval $(t_k,s_k).$  
\item There exists $j\in\{1,\ldots,N^*\}$ such that $t_k=\tau_j.$
\item If $j\in\{1,\ldots,N^*\}$ and $t_k=\tau_j,$ then $s_k=\sigma_j$ and $\ell[q](t_k)=\ell[q](\sigma_k).$
\end{enumerate}
The first statement is required to prove statement 2, and implies that the algorithm determines the correct value of $\mu$ over each interval $(t_k,s_k)$.  Together, statements 2 and 3 then complete the proof, since they imply that the algorithm uncovers all intervals $(t_k,s_k)$ over which $\mu$ is constant.  Moreover, statement 3 implies that the strategy uncovered by the algorithm agrees with $q^*$ at the end point of each of these intervals.  Hence, in between each $\sigma_k$ and $\tau_{k+1},$ the strategies $q$ and $q^*$ necessarily lie on the capacity constraints and so must coincide.  In particular, the existence of the pair $(\mu,q)$ implies that the algorithm does not terminate early but instead generates a pair $(\mu^*,q^*)$ which are defined over $[0,T].$  

\textit{Proof of statement 1:}  Let $k\in\{1,\ldots,N\}.$  It is clear from the definition of $(\lambda_k,t_k)-$admissibility and from the requirement that $q$ must be a minimizer of (\ref{minptwise}), that $q$ must be $(\lambda_k,t_k)-$admissible over the interval $(t_k,s_k)$.  We need to show that $$\lambda_k\sim_k\sup\Lambda(t_k,m_k)=:\theta_k.$$

Recall that the maps $\lambda\mapsto u(t,\lambda)$ of Lemma~\ref{lemma:xinc} are piecewise continuous and monotone increasing at each time $t\in[0,T].$  These maps are constructed in such a way that one can always find a suitable $u(t,\lambda_k)$ to coincide with $q$ over $(t_k,s_k).$  Thus, the monotonicity of $u$ with respect to its second argument implies the relations:
\begin{align}
 &\beta\prec_k\theta_k \quad \Rightarrow \quad \beta\in\Lambda(t_k,m_k)\setminus\Lambda'(t_k,m_k)\label{beta1}\\
 &\beta\succ_k\theta_k \quad \Rightarrow \quad \beta\in\Lambda'(t_k,m_k)\setminus\Lambda(t_k,m_k)\label{beta2}.
\end{align}
Suppose now that  $\lambda_k\prec_k\theta_k.$  If $m_k+\ell[q](s_k)=E^+(s_k),$ then this immediately implies that $\lambda_k\sim_k\theta_k$ since, otherwise, the continuity and monotonicity of $u$ would imply the existence of $\beta$ such that $\lambda_k\prec_k\beta\prec_k\theta_k$ and a $(\beta,t_k)-$admissible $q'$ such that $m_k+\ell[q'](t)\geq E^+(t)$ for all $t\in[t_k,s_k],$ with strict inequality at $t=s_k.$  This would imply that $\beta\in\Lambda'(t_k,m_k),$ which contradicts (\ref{beta1}).  

If, on the other hand, $m_k+\ell[q](s_k)=E^-(s_k),$ then Proposition~\ref{prop:ps} requires that $\mu(s_k^+)-\mu(s_k^-)\leq 0.$  If $\mu$ is constant over an interval $(s_k,t')\subset(s_k,T],$ then the monotonicity property of Lemma~\ref{lemma:xinc} implies that the lower capacity constraint will be broken by $q,$ and no admissible solution will be found.  If $\mu$ is not constant over any such interval, then Proposition~\ref{prop:ps} requires that $q(t)=u(t,\mu(t))=(d/dt)E^-(t)+\alpha E^-(t)$ for all $t\in(s_k,t').$  By the monotonicity of $u,$ this is only possible if $(d/dt)\mu(t)>0$ at some $t\in(s_k,t')$ or if $\mu(t'^+)-\mu(t'^-)>0,$ since otherwise the fact that $\lambda_k\in\Lambda(t_k,m_k)$ implies that $q$ will break the lower capacity constraint.  However, this contradicts condition~(\ref{eq:mucond3}) of the proposition.  Hence, we must have $\lambda_k\sim_k\theta_k$ or $\lambda_k\succ_k\theta_k.$  If $\lambda_k\succ_k\theta_k,$ then similar arguments as above arrive at analogous contradictions to Proposition~\ref{prop:ps} and we conclude that we must therefore have $\lambda_k\sim_k\theta_k.$  This completes the proof of statement 1.

\textit{Proof of statement 2:}  Suppose first that $\tau=t_k$ but that there is no $j\in\{1,\ldots,N^*\}$ such that $\tau=\tau_j.$  Then, statement 1 implies that $\lambda_k\sim_k\sup\Lambda(t_k,m_k)=\theta_k$ and that $q$ must coincide with some $(t_k,\lambda_k)-$admissible $x$ over $(t_k,\sigma_k).$  However, since $\tau$ does not coincide with any $\tau_j,$ the conditions of steps 1 and 2 of the algorithm do not hold at time $\tau.$  If $x\in X(t_k,m_k),$  then Proposition~\ref{prop:ps} implies that $s_k<T$ and $\ell[q](s_k)=E^-(s_k)$ (since the conditions of step 2 do not hold).  Thus, Proposition~\ref{prop:ps} implies that either (i) $s_k=t_{k+1}$ and $\mu(s_k^+)-\mu(s_k^-)\leq 0,$ or (ii) $s_k<t_{k+1}$ and $\mu(t)-\mu(s_k^-)\leq 0,$ for all $t\in[s_k,t_{k+1}].$  In case (i), the monotonicity property of Lemma~\ref{lemma:xinc} implies that $q$ breaks the lower capacity constraint.  In case (ii), since $x$ breaks the lower capacity constraint, we must have that $x(s)<(d/ds)E^-(s)+\alpha E^-(s)$ for all $s$ in some interval $I\subset[s_k,t_{k+1}].$  However, since we require that $q(s)=(d/ds)E^-(s)+\alpha E^-(s)$ for all $s\in(s_k,t_{k+1}),$ this means that we must have $(d/ds)\mu(s)>0$ for all $s$ in some subset $I'\subset I.$  This contradicts the conditions of Proposition~\ref{prop:ps}, and we conclude that there exists $j\in\{1,\ldots,N^*\}$ such that $\tau=\tau_j.$  The same result follows from similar arguments if $x\in X'(t_k,m_k).$  This completes the proof of statement 2.

\textit{Proof of statement 3:}  Let $\tau=\tau_j=t_k.$  First notice that the proof of statement 2 yielded a contradiction when we assumed that $q$ coincided with a $(\lambda_k,t_k)-$admissible $x\in X(t_k,m_k)$ such that $s_k<T$ and $\ell[q](s_k)=E^-(s_k).$  Since statement 1 implies that $q$ must coincide with some $(\lambda_k,t_k)-$admissible $x$ over $(t_k,s_k),$ it follows that, if $x\in X(t_k,m_k),$ then either $s_k=T$ or $\ell[q](s_k)=E^+(s_k).$  Similarly, if $x\in X'(t_k,m_k),$ then either $s_k=T$ or $\ell[q](s_k)=E^-(s_k).$  The assumption that the intervals $(t_k,s_k)$ are maximal thus implies that $s_k$ coincides with $\sigma_j$ and that $\ell[q](s_k)=\ell[q^*](\sigma_j),$ provided that $\sigma_j$ is independent of any choices made at steps 1 or 2 of the algorithm.  This independence is not immediately clear since the general assumptions on the cost function $L$ mean that there may be more than one choice of $(\lambda_k,\tau)-$admissible strategy.



To prove the independence of $\sigma_j$ from the choice of strategy made in the algorithm at steps 1 or 2, suppose that there are two $(\lambda_k,\tau)-$admissible strategies $x^1$ and $x^2$ which satisfy the conditions of step 1 or 2 of the algorithm.  
Let $\sigma^i$ denote the time $\sigma_j$ if strategy $x^i$ is chosen at time $\tau$ and assume without loss of generality that $\sigma^1\leq\sigma^2.$  The monotonicity property of Lemma~\ref{lemma:xinc} makes it immediately clear that, if we choose $x^1$ then the algorithm will select $\mu^*(t)=\lambda_k$ for all $t\in(\sigma^1,\sigma^2).$  Hence, $\mu^*$ is constant over $(\tau,\sigma^2).$  In particular, if there are several choices of strategy to choose at time $\tau,$ and if $\sigma^2$ is the latest time $\sigma_j$ associated with any of the choices, then the assumption that each interval $(\tau_j,\sigma_j)$ is maximal implies that $\sigma_j=\sigma^2,$ regardless of the strategy chosen.  This completes the proof of statement 3.



\end{proof}

\section{A simple storage model}\label{sec:constraints}


We introduce here a simple but practically relevant model of a store, which operates purely within a single wholesale market.  The set of admissible powers $U$ is taken to be of the form $U=[-q_{\max}^-,q_{\max}^+],$
  with $-q_{\max}^-<0<q_{\max}^+.$  \lff{The store is assumed to have a fixed maximum energy capacity $M>0,$ so that the admissible domain satisfies} $E^-(t)=0$ and $E^+(t)=M$ for all $t\in(0,T).$ We further assume that we require the store to be empty at both the start and end times, so that $E^-(0)=E^+(0)=E^-(T)=E^+(T)=0.$  The cost functional $C$ is defined by
\begin{align}\label{eq:Cexample}
\lf{C[q]=\int_0^Tw(q(t))p(t)q(t),}
\end{align}
\lfff{where} $p(t)\in\re$ \lfff{is the price for one unit of power at time} $t.$  Here, $w:U\to(0,\infty)$ is a piecewise continuous function which acts as a weighting to the cost of buying and selling electricity, and should satisfy that $w(x)\geq 1$ if $x\geq 0;$ and $0<w(x)\leq 1$ if $x\leq 0.$  More intuitively, $w$ can be thought of as the result of inefficiencies during the charging and discharging processes of the store: in order to fill the store at a rate $q(t)$ at time $t,$ the storage operator would actually need to purchase a greater amount of electricity, since power is lost during the charging process.  Similarly, whenever an amount $q(t)$ is discharged from the store, a lesser amount is actually available to sell on the market.  For simplicity, we assume that $w$ takes on two values $w_1\geq 1$ and $w_2\in(0,1],$ so that
\begin{align}\label{w}
 w(x)=\left\{\begin{array}{lll}
              w_1 & & \textnormal{if} \ x\geq 0\\
              w_2 & & \textnormal{if} \ x<0.
             \end{array}
\right.
\end{align}
In the special case where there are no additional operating constraints (i.e. $\Gamma=\emptyset)$, this cost functional is convex and thus an optimal strategy exists.  Proposition~\ref{prop:algpot2} states moreover that an optimal strategy can be determined via the algorithm of Section~\ref{sec:alg}.  \lfff{Some simple results are immediately available, as stated in the following lemma.}

\begin{lemma}
\lf{Define the parameters of a store as above and let the cost functional $C$ be given by (\ref{eq:Cexample}).  \lfff{Assume that} $p(t)\geq 0$ \lfff{for all} $t\in[0,T]$ and let $\ell_0,\ell_T\geq 0$ be the required levels of stored energy at times $0$ and $T$ respectively.}  \lfff{The following properties hold:} 
\begin{enumerate}
 \item If $\ell_T\geq\ell_0$ and $\dot p\equiv 0,$ then there is no non-zero strategy $q\in X$ which yields a positive profit.
 \item \lfff{The minimum cost, over all admissible strategies, is a decreasing function of} $w_2,M,q_{\max}^-$ and $q_{\max}^+.$  \lfff{It is an increasing function of} $\alpha$ and $w_1.$
\end{enumerate}
\end{lemma}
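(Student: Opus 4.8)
The plan is to prove the two parts by exploiting the structure of the cost functional (\ref{eq:Cexample}) together with the level dynamics (\ref{elldefn}), rather than invoking the algorithm directly. For part 1, I would argue that under $\dot p\equiv 0$ the price is a constant $p\geq 0$, so the total cost of any strategy $q$ reduces to $p\int_0^T w(q(t))q(t)\,dt$. Splitting the integrand according to the sign of $q$ and using (\ref{w}), the charging contribution carries weight $w_1\geq 1$ and the discharging contribution carries weight $w_2\leq 1$. The key identity is that the net energy bought and sold is controlled by the level dynamics: integrating (\ref{elldefn}) in the form $\frac{d}{dt}(e^{\alpha t}\ell[q](t))=e^{\alpha t}q(t)$ from $0$ to $T$ gives $e^{\alpha T}\ell_T-\ell_0=\int_0^T e^{\alpha s}q(s)\,ds$. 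When $\ell_T\geq\ell_0$ and $\alpha\geq 0$ this quantity is nonnegative, which forces the total (leakage-weighted) charging to dominate the discharging; combined with $w_1\geq 1\geq w_2$ and $p\geq 0$, this should show $C[q]\geq 0$ for every admissible $q$, with equality only for $q\equiv 0$. The only delicate point is handling the leakage factor $e^{\alpha s}$ inside the integral while the weights $w_1,w_2$ sit outside it, so I would either treat $\alpha=0$ cleanly and then note the leakage can only worsen the balance, or bound $w(q(s))q(s)$ below by $w(q(s))e^{\alpha s}q(s)$-type estimates pointwise.

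For part 2, the strategy is a monotonicity-by-relaxation argument on the feasible set and the integrand. I would show that enlarging $M$, $q_{\max}^-$, or $q_{\max}^+$ enlarges the set $X$ of admissible strategies (the capacity box and the power interval $U=[-q_{\max}^-,q_{\max}^+]$ both grow), so the minimum over a larger set can only decrease; this is immediate since every strategy admissible for the smaller parameters remains admissible for the larger ones. The dependence on $w_2$, $w_1$, and $\alpha$ is a pointwise-integrand comparison: for fixed $q$, increasing $w_2$ decreases $w(q(t))p(t)q(t)$ wherever $q(t)<0$ (since the factor $w_2$ multiplies a nonpositive quantity $p(t)q(t)\leq 0$), so the cost of every fixed strategy decreases, hence so does the minimum; symmetrically, increasing $w_1$ multiplies the nonnegative charging cost and raises $C$. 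The subtle case is $\alpha$, because leakage does not appear in $C$ directly but only through the constraint $\ell[q]$; here I would argue that increasing $\alpha$ shrinks the set of achievable level trajectories (more energy is lost, so a given target level requires strictly more costly charging), making the feasible set effectively more restrictive and the minimum cost larger.

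The main obstacle I anticipate is the $\alpha$-monotonicity claim in part 2, since unlike the other five parameters it affects the objective only indirectly through the dynamics (\ref{elldefn}). The clean argument is to fix an optimal strategy $q^*_\alpha$ at leakage $\alpha$ and construct, for a smaller leakage $\alpha'<\alpha$, an admissible strategy at $\alpha'$ whose cost is no larger. One natural construction is to keep the same net charging/discharging profile but exploit that a trajectory feasible under higher leakage is feasible (after a cost-reducing modification) under lower leakage, because lower leakage means each unit charged is retained more efficiently. Making this comparison rigorous while respecting both the capacity box and the endpoint conditions $\ell_0,\ell_T$ is where the care is needed; everything else reduces to the sign-of-integrand observations already sketched, which are routine.
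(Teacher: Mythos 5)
Your overall strategy matches the paper's, but the two steps you yourself flag as ``delicate'' are precisely where the proposal, as written, does not go through. For part 1, the weighted identity $e^{\alpha T}\ell_T-\ell_0=\int_0^T e^{\alpha s}q(s)\,ds$ is the wrong integral to work with, and neither of your patches closes the gap: the pointwise estimate $w(q(s))q(s)\geq w(q(s))e^{\alpha s}q(s)$ is false exactly on the charging set (for $q(s)>0$ and $\alpha s>0$ it asserts $e^{\alpha s}\leq 1$), and ``leakage can only worsen the balance'' is the claim to be proved, not an argument. The paper's proof avoids the weighted form altogether: integrating (\ref{elldefn}) directly gives $\int_0^T q(t)\,dt=(\ell_T-\ell_0)+\alpha\int_0^T\ell[q](t)\,dt$, and the second term is nonnegative because of the lower capacity constraint $\ell[q](t)\geq E^-(t)=0$ --- an ingredient your sketch never invokes. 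With that, your (correct) sign-splitting observation $w(q)q\geq q$ and $p\geq 0$ give $C[q]=p\int_0^T w(q(t))q(t)\,dt\geq p\int_0^T q(t)\,dt\geq 0$; the leakage term has a favorable sign and needs no handling at all.

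For part 2, your arguments for $M$ and $q_{\max}^{\pm}$ (feasible-set nesting) and for $w_1,w_2$ (pointwise integrand comparison at a fixed strategy) are correct and coincide with the paper's, which packages them as: the optimizer $q^*$ for parameters $\theta$ remains admissible after the relaxation, and its cost under the new weights is no larger. For $\alpha$, however, you only restate the problem. Note that no feasible-set-nesting argument can work here: $C$ does not involve $\alpha$ at all, and the admissible sets for different $\alpha$ are not nested (a strategy admissible at high leakage can overfill the store, or miss the exact terminal level, at low leakage), so an explicit modification of $q^*$ is unavoidable --- and your proposal stops before producing one. The paper's construction is: for $\alpha'<\alpha$, keep $q'(t)=q^*(t)$ whenever $q^*(t)\leq 0$ or the level under leakage $\alpha'$ is below $M$, and otherwise truncate the charging rate so that the level sits at $M$. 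Then $q^*(t)-q'(t)\geq 0$ with support in $\{q^*>0\}$, so $C[q^*]-C[q']=\int_0^T w(q^*(t))p(t)\left(q^*(t)-q'(t)\right)dt\geq 0$, whence $V(\theta)\geq C[q']\geq V(\theta')$. (One residual detail --- checking that $q'$ still meets the terminal-level requirement, discharging any surplus, which only lowers cost since $p\geq 0$ --- is elided by the paper as well; but that is a repair to an explicit construction, whereas your version has no construction to repair.)
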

\begin{proof}
If $q\in X$ is not the zero function, and if $p(t)=p$ is constant for all $t\in[0,T],$ then
\begin{align*}
 C[q]= p\int_0^Tw(q(t))q(t) \ dt\geq p\int_0^Tq(t) \ dt=p\left(\ell_T-\ell_0\right)+\alpha\int_0^T\ell[q](t) \ dt \geq 0.
\end{align*}
This proves statement 1.  To prove statement 2, let $q^*$ be the optimal strategy associated with the parameters $\theta=(\alpha,w_1,w_2,M,q_{\max}^-,q_{\max}^+)$ and write $V(\theta)~=~C[q^*].$
It is easy to see that $q^*$ is also admissible with respect to the parameters $\theta+\delta\gamma,$ for any $\delta>0$ and any $\gamma=(\gamma_1,\ldots, \gamma_6)$ with $\gamma_1=0,$ \ $\gamma_2\in\{-1,0\}$ and $\gamma_3,\ldots,\gamma_6\in\{0,1\}.$  Thus, if $w_{\gamma}(x)$ is defined by $w_{\gamma}(x)=\theta_2+\delta\gamma_2$ whenever $x\geq 0$ and $w_{\gamma}(x)=\theta_3+\delta\gamma_3$ whenever $x<0,$ then 
\begin{align*}
V(\theta+\delta\gamma)\leq \int_0^Tw_{\gamma}(q^*(t))p(t)q^*(t) \ dt\leq \int_0^Tw(q^*(t))p(t)q^*(t) \ dt=V(\theta).
\end{align*}
This proves the required growth conditions of the minimum cost with respect to $w_1,w_2,M,q_{\max}^-$ and $q_{\max^+}.$  To complete the proof, let $\alpha'<\alpha$ and denote by $\ell'[q](t)$ the level of stored energy at time $t$ associated with the strategy $q$ and the leakage rate $\alpha'.$  Set $q'(t)=q^*(t)$ whenever $q^*(t)\leq 0$ or $\ell'[q'](t)<M.$ For all other $t\in[0,T],$ set $q'(t)=\min(\alpha M,q^*(t)).$  Then $q'$ is admissible with respect to the new parameter vector $\theta'=\theta+(\alpha'-\alpha,0,0,0,0,0).$  Hence,
\begin{align*}
 V(\theta)=C[q^*]=C[q']+\int_0^Tw(q(t))p(t)\left(q^*(t)-q'(t)\right) \ dt\geq C[q']\geq V(\theta')
\end{align*}
and the minimum cost function is increasing with respect to the leakage rate $\alpha.$

\end{proof}

\subsection{A storage model with minimum switching times}\label{sec:switching}
\lf{Throughout this section, we employ the notation $t^+$ and $t^-$ to mean $\max\{t,0\}$ and $\min\{t,T\}$ respectively for any $t\in\re.$}  We \lff{investigate the impact of introducing a non-empty set of additional constraints $\Gamma$ to the simple storage model introduced above.  In this setting, Lemma~\ref{lemma:constraints} no longer holds in general.  It is therefore not clear that there will exist a method with similar time-localization properties as the algorithm of Section~\ref{sec:alg}.  We demonstrate this problem explicitly in this section.  In particular, we} impose a minimum switching time constraint $\tau>0$ on the simple storage model introduced above.  This is the minimum amount of time for which the store must be idle when switching between the charging and discharging modes of operation.  \lf{Hence the additional constraint set $\Gamma$ consists of the requirement that any power output $q$ must satisfy that}
\begin{align}\label{Gamma}
 q(t)q(s)\geq 0 \ \ \ \ \ \ \forall t\in[0,T] \ \textnormal{and} \ s\in(t-\tau,t+\tau)\cap[0,T].
\end{align}
\lf{The minimization problem of Proposition~\ref{prop:ps} is difficult to solve in its current form, but we may rewrite the problem by adapting} the Lagrangian $\mathcal L$ of (\ref{relaxed}) \lf{to incorporate the minimum switching time constraint as follows:}
$$\Lambda[q,\mu,\lambda]:=\underbrace{\int_0^T\Big(w(q(t))p(t)-e^{\alpha t}\mu(t)\Big)q(t) \ dt}_{\lf{=\mathcal L[q,\mu]}}-\int_0^T\int_t^{(t+\tau)^-}\lambda(t,s)q(s)q(t) \ ds  \ dt$$ 
for all piecewise continuous $q,\mu:[0,T]\to U$ and $\lambda:[0,T]^2\to[0,\infty].$  \lf{In order to apply Proposition~\ref{prop:ps}, we need to reduce $\Lambda$ into a form which is compatible with (\ref{eq:propmax}).}  To do this, we carefully choose $\lambda$ to depend on \lf{$q$ and} $\mu$ as follows: For a given $\mu,$ let $q_{\mu}:[0,T]\to U$ solve
\begin{align}\label{eq:qmu}
\Big(w(q_{\mu}(t))p(t)-e^{\alpha t}\mu(t)\Big)q_{\mu}(t)\leq \Big(w(x)p(t)-e^{\alpha t}\mu(t)\Big) x \ \ \ \ \ \forall x\in U.
\end{align}
Since the efficiency function $w$ takes the simple form (\ref{w}), it is clear that $q_{\mu}(t)$ takes a value in $\{-q_{\max}^-,0,q_{\max}^+\}$ whenever $w_1p(t)-e^{\alpha t}\mu(t)\neq 0$ and $w_2p(t)-e^{\alpha t}\mu(t)\neq 0$ at $t\in[0,T].$  \lf{In these cases $q_{\mu}(t)$ is uniquely defined.}  If one of these relations is an equality instead, however, then $q_{\mu}(t)$ may take any value in $U.$  Hence, there may be multiple solutions associated with $\mu.$  Let now $\lambda$ minimize
\begin{align}\label{lambdamin}
 \int_0^T\left|w(q_{\mu}(t)p(t)-e^{\alpha t}\mu(t)-\int_{(t-\tau)^+}^{(t+\tau)^-}\lambda(t,s)q_{\mu}(s)\right|dt
 \end{align}
over all $\lambda:[0,T]^2\to[0,\infty]$ such that (i) $\lambda(t,s)=0$ whenever $q_{\mu}(s)q_{\mu}(t)\geq 0$ and (ii) the inequality 
\begin{align}\label{lambdamin2}
 \Big(w(q_{\mu}(t))p(t)-e^{\alpha t}\mu(t)-\int_{(t-\tau)^+}^{(t+\tau)^-}\lambda(t,s)q_{\mu}(s)\Big)\left(w(q_{\mu}(t))p(t)-e^{\alpha t}\mu(t)\right)\geq 0 
\end{align}
holds for all $t\in[0,T].$  We write $\lambda=\lambda(q_{\mu}).$  

\lf{The significance of the above construction is that an optimal strategy can now be found by searching for minimizers of $\Lambda.$  Specifically, if  a strategy $q^*$ satisfies that
\begin{align}\label{Lambdamin}
 \Lambda[q^*,\mu^*,\lambda^*]\leq\Lambda[q,\mu^*,\lambda^*]
\end{align}
for all piecewise continuous $q:[0,T]\to U,$ and if (i) $(q^*,\mu^*)$ satisfies the conditions~(\ref{eq:mucond1})--(\ref{eq:mucond3'}), (ii) $\lambda^*$ is of the form $\lambda^*=\lambda(q_{\mu^*})$ and (iii) $q^*$ is admissible, then $q^*$ is an optimal strategy.  To see this, choose $\rho^*$ large enough that the relation
\begin{align}\label{rho}
\rho^*\geq q_{\max}^+q_{\max}^-\int_0^T\int_t^{(t+\tau)^-}\lambda^*(t,s) \ ds \ dt
\end{align}
holds.  It is clear from the assumption that $q^*$ is admissible that $S[q^*]=0$ and it follows from (\ref{Lambdamin}) that $q^*$ minimizes $\mathcal L[q,\mu^*]+\rho^*S[q]$ over all admissible $q\in X.$  Assumption~(\ref{rho}) moreover implies that $q^*$ minimizes $\mathcal L[q,\mu^*]+\rho^*S[q]$ over all piecewise continuous $q:[0,T]\to U$ because, for any non-admissible $q,$ we have
\begin{align*}                                                                                                                                                                                                                                                                                                                                                                                                                                            \mathcal L[q,\mu^*]+\rho^*\underbrace{S[q]}_{=1}
&\geq\mathcal L[q,\mu^*]-\int_0^T\int_t^{(t+\tau)^-}\lambda^*(t,s)q(s)q(t) \ ds \ dt
=\mathcal L[q^*,\mu^*]+\rho^* \underbrace{S[q^*]}_{=0}.                                                                                                                                                                                                                                                                                                                                                                                                                                          \end{align*}
Thus, Proposition~\ref{prop:ps} implies that $q^*$ is optimal.}

\lf{Condition (iii) of the above argument requires that the minimizer $q^*$ of (\ref{Lambdamin}) is admissible, in order for $q^*$ to be an optimal strategy.  As in the algorithm of Section~\ref{sec:alg}, a suitable choice of $\mu^*$ will ensure that the capacity constraints are satisfied, whilst the power rating constraints are immediately absorbed into the minimization problem~(\ref{Lambdamin}).  The following lemma ensures that, as long as a minimizer $q^*$ exists, then we may select $q^*$ so that the switching time constraints are also satisfied.}

\begin{lemma}\label{lemma:lambda}
\lf{Let $\mu:[0,T]\to\re$ be piecewise differentiable and let $\lambda$ be of the form $\lambda=\lambda(q_{\mu})$ for some minimizer $q_{\mu}$ of (\ref{eq:qmu}).}  Let $Y$ be the set of piecewise continuous $q^*:[0,T]\to U$ such that 
$\Lambda[q^*,\mu,\lambda]\leq \Lambda[q,\mu,\lambda]$
for all piecewise continuous $q:[0,T]\to U.$  If $Y\neq\emptyset,$ then there exists $q^*\in Y$ which satisfies the switching time constraints~(\ref{Gamma}).   
 \end{lemma}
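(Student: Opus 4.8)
The plan is to analyze the structure of the minimization problem~(\ref{Lambdamin}) pointwise in time and exploit the freedom in choosing $q^*(t)$ at those times where the minimizer of the relaxed Lagrangian is non-unique. The key observation is that the integrand of $\Lambda[q,\mu,\lambda]$ couples $q(t)$ and $q(s)$ only through the penalty term $\lambda(t,s)q(s)q(t)$, but by construction $\lambda=\lambda(q_\mu)$ vanishes whenever $q_\mu(s)q_\mu(t)\geq 0$. So first I would establish that any $q^*\in Y$ must agree with a minimizer $q_\mu$ of~(\ref{eq:qmu}) at every time $t$ where that minimizer is \emph{uniquely} defined, i.e.\ wherever $w_1p(t)-e^{\alpha t}\mu(t)\neq 0$ and $w_2p(t)-e^{\alpha t}\mu(t)\neq 0$. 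At such times $q^*(t)\in\{-q_{\max}^-,0,q_{\max}^+\}$ is forced, and the only flexibility available to us lies on the (measure-controlled) set of times where one of these two quantities vanishes and $q^*(t)$ may take any value in $U$.

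The heart of the argument is then to show that the switching constraints~(\ref{Gamma}) can be repaired using precisely this flexibility, without increasing $\Lambda$. I would argue as follows. The defining inequality~(\ref{lambdamin2}) for $\lambda$ guarantees that the effective pointwise coefficient
\[
c(t):=w(q_\mu(t))p(t)-e^{\alpha t}\mu(t)-\int_{(t-\tau)^+}^{(t+\tau)^-}\lambda(t,s)q_\mu(s)\,ds
\]
has the same sign as the unpenalized coefficient $w(q_\mu(t))p(t)-e^{\alpha t}\mu(t)$. Consequently, the sign of the minimizing $q^*(t)$ is unchanged by the penalty: charging intervals stay charging, discharging intervals stay discharging, and the penalty only ever pushes $q^*(t)$ \emph{toward} zero (never reversing its sign). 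I would make precise that this means a would-be violation of~(\ref{Gamma})---a charging value too close to a discharging value---can only occur at times where $c(t)=0$, which is exactly where $q^*(t)$ is free. At every such time I would simply reset $q^*(t)=0$; since $c(t)=0$ there, the value of the integrand $c(t)q^*(t)$ is unaffected and $\Lambda$ is unchanged, so the modified strategy still lies in $Y$.

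After this resetting I would verify that the new $q^*$ satisfies~(\ref{Gamma}): within any maximal interval on which $q^*$ has constant sign the constraint holds trivially, and at the boundary between a charging region and a discharging region the intervening free-coefficient times now carry the value $0$, inserting the required idle buffer. The only genuinely delicate point is to ensure that these idle buffers are \emph{wide enough}, namely of length at least $\tau$, rather than just nonempty. Here I would invoke the construction of $\lambda$ in~(\ref{lambdamin}) together with the penalty window $((t-\tau)^+,(t+\tau)^-)$: whenever $q_\mu$ would switch sign across a gap shorter than $\tau$, the cross-terms $\lambda(t,s)q_\mu(s)$ are active for the offending pairs, forcing $c(t)=0$ over the whole offending window and thereby freeing $q^*$ throughout it. I expect this verification---that the free-coefficient set genuinely covers a full $\tau$-width neighborhood around every sign change, so that the reset yields a buffer of the correct length---to be the main obstacle, and I would handle it by a careful case analysis on the two values $w_1,w_2$ of the efficiency function, using the monotonicity of $\lambda\mapsto u(t,\lambda)$ from Lemma~\ref{lemma:xinc} to control how $q_\mu$ can switch.
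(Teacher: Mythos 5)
Your proposal is built on the right mechanism---the paper's proof also works with the effective coefficient $c(t)$ (the functional derivative $\delta\Lambda/\delta q(t)$), uses (\ref{lambdamin2}) to ensure the penalty never reverses its sign, and removes violations by resetting $q^*$ to zero at times where $c$ vanishes---but three of your supporting claims are wrong, and one of them breaks the argument. First, the claim that any $q^*\in Y$ must agree with $q_\mu$ wherever $w_1p(t)-e^{\alpha t}\mu(t)\neq 0$ and $w_2p(t)-e^{\alpha t}\mu(t)\neq 0$ is false: membership in $Y$ is governed by the \emph{penalized} coefficient $c(t)$, not the unpenalized one, and the whole point of the construction is that the penalty drives $c(t)$ to zero at some times where the unpenalized coefficient is nonzero, freeing $q^*(t)$ to be $0$ there rather than forcing $q^*(t)=q_\mu(t)$. (If your claim were true, every $q^*\in Y$ would inherit the switching violations of $q_\mu$ at such times, and the lemma itself would generally be false.) Second, your claim that a violation of (\ref{Gamma}) can occur only at times where $c(t)=0$, i.e.\ that \emph{both} members of a violating pair are free, is too strong. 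What the minimization (\ref{lambdamin}) subject to (\ref{lambdamin2}) actually gives---and all the paper claims or needs---is that \emph{at least one} member of each violating pair $(t,s)$ has vanishing coefficient: raising $\lambda(t,s)$ from $0$ moves $c(t)$ and $c(s)$ toward zero, and (\ref{lambdamin2}) halts the increase when the first of the two reaches zero; generically exactly one does.

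This matters because your repair step---reset $q^*(t)=0$ at \emph{every} free time---is not $\Lambda$-neutral. Zeroing a set $Z\subset\{t:c(t)=0\}$ changes $\Lambda$ by $-\int\int_{\{s>t:\ t,s\in Z\}}\lambda(t,s)q^*(s)q^*(t)\,ds\,dt$: the linear terms released on $Z$ cancel exactly against the cross terms between $Z$ and its complement (this is precisely what $c\equiv 0$ on $Z$ buys you), but the pair terms internal to $Z$ survive. So if both members of a violating pair are free and you zero both, $\Lambda$ increases by $\lambda(t,s)|q^*(t)q^*(s)|>0$ per such pair, and the modified strategy leaves $Y$. The correct move, which is the paper's, is to zero exactly \emph{one} member of each violating pair, namely one whose coefficient vanishes. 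Finally, the issue you single out as the main obstacle---that the idle buffers must have width at least $\tau$---is not an obstacle at all: (\ref{Gamma}) requires only $q(t)q(s)\geq 0$ for $|t-s|<\tau$, and once one member of every opposite-sign pair within distance $\tau$ has been set to zero, every such product is $\geq 0$; no interval of zeros of length $\tau$ has to be exhibited, and your proposed route to one (forcing $c\equiv 0$ on a whole window) is again the too-strong claim above.
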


\begin{proof}
 Suppose the claim is not true.  Then there exists $t,s\in[0,T]$ such that $|t-s|<\tau$ and $q^*(t)q^*(s)<0.$  Thus $\lambda(t,s)q^*(t)q^*(s)\leq 0.$  Note that
 \begin{align*}
  f[q,\mu,\lambda](t):=\frac{\delta \Lambda}{\delta q(t)}[q,\mu,\lambda]=w(q((t))p(t)-e^{\alpha t}\mu(t)-\int_{(t-\tau)^+}^{(t+\tau)^-}\lambda(t,s)q(s) \ ds,
 \end{align*}
\lf{where $\delta\Lambda/\delta q(t)$ denotes the functional derivative of $\Lambda$ with respect to $q(t)$.}  In particular, the term $\lambda(t,s)$ only influences the value of \lf{$q^*$ at times $t$ and $s.$}  The condition that $\lambda$ minimizes (\ref{lambdamin}) subject to the constraint (\ref{lambdamin2}) implies that $f[q^*,\mu,\lambda](t)=0$ or $f[q^*,\mu,\lambda](s)=0.$  In particular, we may always choose a $q^*$ so that $q^*(t)=0$ or $q*(s)=0.$
\end{proof}

\lf{An interpretation of the role of $\lambda^*$ in the minimization problem~(\ref{Lambdamin}) is that the function acts to adjust $q_{\mu^*}$ to a new strategy which satisfies the switching time constraints.  It does this by finding all pairs of times $(t,s)$ such that $|s-t|<\tau$ and $q_{\mu^*}(s)q_{\mu^*}(t)<0,$ and adjusting $q_{\mu^*}(t)$ or $q_{\mu^*}(s)$ to 0.  The condition that $\lambda^*(t,s)=0$ whenever $q_{\mu^*}(t)q_{\mu^*}(s)\geq 0$ ensures $q_{\mu^*}$ is only adjusted at times when the minimum switching time constraint~(\ref{Gamma}) is broken.  Choosing $\lambda^*$ to minimize (\ref{lambdamin}) subject to the constraint~(\ref{lambdamin2}) ensures that $\lambda^*$ makes these adjustments in such a way that, if $a(t)$ is the absolute change in instantaneous cost from the original strategy $q_{\mu^*}$ at time $t,$ then the accumulation of these cost changes $\int_0^Ta(t)dt$ is minimal.  Thus, an equivalent and generally easier method for determining the optimal strategy $q^*,$ given $\mu^*,$ is to find a suitable subset of times $I\subset[0,T]$ and set $q^*(t)=0$ if $t\in I$ and $q_{\mu^*}(t)=q_{\mu^*}(t)$ if $t\in[0,T]\setminus I.$  The subset $I$ should be chosen to minimize the associated absolute cost change integral $\int_0^Ta(t)dt.$}

\lf{If $\mu^*$ is given, then the minimization problem~(\ref{Lambdamin}) is much easier to solve than (\ref{eq:propmax}).  The main task in determining an optimal strategy $q^*,$ therefore, is to choose the correct reference price function $\mu^*.$  As alluded to at the beginning of Section~\ref{sec:constraints}, this is a more difficult task now that the set of additional constraints $\Gamma$ is non-empty.  To illustrate the complexity of the problem, consider the following scenario: }

\lf{Let the power ratings and capacity constraints of the store be given by $q_{\max}^-=q_{\max}^-=1$ and $M=2$ and let the minimum switching time of the store be $\tau=1.$  Assume that there is no leakage and the store is perfectly efficient so that $\alpha=0$ and $w_1=w_2=1.$  Suppose that $T=3$ and that the price of electricity $p$ is a function of time given by $p(t)=p_i$ for all $t\in[i-1,i)$ and for each $i\in\{1,2,3\},$ where the values $p_i$ satisfy $p_3<p_1<p_2.$  Suppose further that $\half(p_1+p_2)>p_3.$  It is not difficult to see that the optimal strategy $q^*$ satisfies $q^*(t)=1$ if $t\in[0,\half]$ and $q^*(t)=-1$ if $t\in[\frac{3}{2},2],$ with $q^*(t)=0$ elsewhere.  The corresponding reference price $\mu^*$ satisfies $\mu^*(t)=\mu_1:=\half(p_1+p_2)$ if $t\in[0,2)$ and $\mu^*(t)=\mu_2=p_3$ otherwise.
However, the correct choice of $\mu_1$ here relies on knowing the correct choice of $\mu_2;$ unlike in the algorithm of Section~\ref{sec:alg}, the constant sections of the reference price function cannot be determined independently.  To see this, we simply remark that, if we had guessed at a constant reference price function $\mu_0^*\equiv\mu_1,$ then we would arrive at the corresponding solution $q_0^*$ with $q_0^*(t)=1$ if $t\in[0,1)\cup[2,3]$ and $q_0^*(t)=0$ otherwise.
This strategy never reaches the upper or lower capacity constraint after the initial time, and so there is no chance to update our reference price function in accordance with Proposition~\ref{prop:ps}.  Even though our guess $\mu^*_0$ is correct over the time interval $[0,2),$ it is only possible to know this when considering the value that $\mu^*$ attains over the time interval $[2,3].$}

\lf{This issue prevents the methods of Section~\ref{sec:alg} from working here.  However,} as long as the store is required to perform at least one full charge and discharge cycle then the determination of a suitable $\mu^*$ still reduces to a localized problem thanks to Lemma~\ref{lemma:constraints}.  
Finally, we remark that, if $\mu^*$ is a reference price function and if $q^*_{\mu^*}$ is the associated strategy, constructed via the minimization~(\ref{Lambdamin}), then it can easily be shown that at each time $t\in[0,T],$ the level of the store $\ell[q_{\mu^*}^*]$ is increasing in $\mu^*.$  Whilst the algorithm of Section~\ref{sec:alg} no longer works in this more complicated setting, we can still use this monotonicity in order to reduce the choice of $\mu^*$ at each time to a suitable range, using similar arguments as in the case with no additional constraints.   
\section{Illustrative results}
The following graphs plot the optimal strategies for a store \lf{which follows the model set out in Section~\ref{sec:constraints},} with $q_{\max}^+=q_{\max}^-$ and $M/q_{\max}^+=7.$  We take $\alpha=0,$ \ $w_1=1$ and vary $w_2$ as labeled below.  The bottom two figures plot the level of the store $\ell[q^*](t)$ at each time $t$, assuming that the optimal strategy $q^*$ is followed.  The top figures both plot the price of electricity $p:[0,T]\to[0,\infty)$ which the store faces in making its operating decision.  Here, we have chosen hourly prices from the first 230 hours of N2EX's day-ahead auction in November 2013.
\begin{figure}[H]
 \begin{center}
   \includegraphics[width=15cm,height=6cm, trim=0cm 6cm 0cm 5cm]{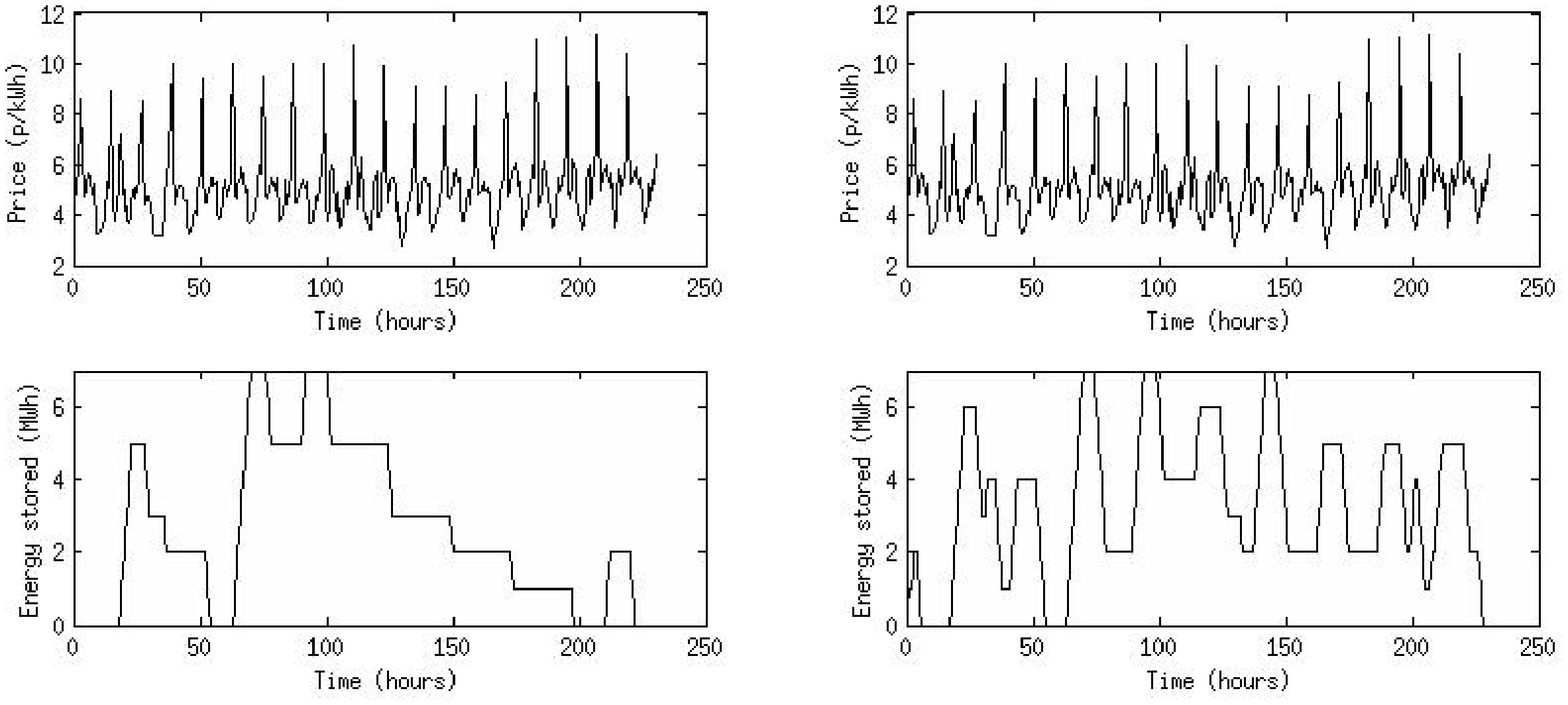}
 \end{center}
 \caption{November 2013 hourly day-ahead auction prices (top left and top right); optimal fill level of the store with $w_2=0.5$ (bottom left) and $w_2=0.7$ (bottom right).}
\end{figure}
An immediate observation is that the store cycles more frequently as its efficiency $w_2$ increases.  \lfff{Less energy is lost during operation at higher efficiencies and the store is able to earn revenues over a larger proportion of elapsed time.}  
We highlight here that, although the price of electricity follows a reasonably periodic pattern (with periods of roughly a day in length), it does not follow that the level of the store is the same at the start and end of each day \lff{(see, for example, \cite{MacSliSta} for a proof of the potential non-periodicity of solutions in a periodic setting)}.  This underlies an important motivation for choosing this method over a dynamic programming approach.  Using a dynamic programming method, one works backwards in time and evaluates, at each $t\in[0,T],$ the value function
\begin{align*}
 V(t,x):=\min_{q\in X}\int_t^TL(q(s),s)ds,
\end{align*}
where the minimization is subject to the constraints $q(t)=x.$  However, this requires information about all prices over the time $[0,T].$  If $T$ is large (say, $T=40$ years, the average life of a pumped hydro store), then the number of calculations becomes infeasible.  One option is to split $[0,T]$ into a union of smaller disjoint intervals $I_1\cup\ldots\cup I_N=[0,T],$ for some $N\in\n,$ and to find the optimal strategy over each of these smaller intervals.  However, this requires setting an end-state for each interval, which may not be known.  A reasonable guess in the above examples would be to assume that the store is empty at some off-peak time during each night but, as seen, such an assumption would lead to a sub-optimal solution.  Using our approach of Section~\ref{sec:alg}, the algorithm has the property that it implicitly reduces our problem to a series of new optimization problems, which are localized in time.  

\section{Conclusions}
We have presented a method for determining the operating strategy for a store to maximize its arbitrage profits.  
Our setting allows for leakage, inefficiencies and general operating costs which are functions of the power output.  \lff{This setting also allows for time-varying constraints on the power output of the store.}  A significant benefit associated with this method is the implicit localization in time of the solution.  \lf{Moreover, we have shown that there only exists an optimal strategy of the form of Proposition~\ref{prop:ps} if the algorithm does not terminate early.}  \lff{As an extension, we have discussed the inclusion of more general operating constraints and proposed a method for determining optimal strategies in the presence of minimum switching time constraints.}

We believe that the theory put forward in this paper serves as a good starting point for evaluating the profits available to a store.  The assumption that prices can be predicted over suitable periods of time is a good approximation to the situation where a store trades through bilateral contracts, or through an auctioning market.  A next step would be to consider the store as a larger player, whose actions have an impact on the price of electricity, and it is believed that the approaches given in this paper can be extended to this new setting.

 
 \section*{Acknowledgments}
 This work is made possible by the IMAGES (Integrated Market-fit Affordable Grid-Scale Energy Storage) research group.  The authors would like to thank the other members of the group for their suggestions and insight.  Particular thanks go to Monica Giulietti, Jihong Wang, Xing Luo, Andrew Pimm and Seamus Garvey.  The authors would also like to thank Stan Zachary, James Cruise and Richard Gibbens for their many helpful discussions related to this topic.

\end{document}